\documentclass[reqno]{amsart}
\usepackage{amsmath,amsfonts,amssymb,amsthm}
\usepackage{fancyhdr}
\pagestyle{plain}
\lhead{}
\chead{}
\rhead{}
\lfoot{}
\cfoot{\thepage}
\rfoot{}


%
\DeclareMathOperator{\F}{Frob}

	\newtheorem{theorem}{Theorem} 
	\newtheorem*{theorem*}{Theorem} 
	
	\newtheorem*{lema*}{Lemma}
	\newtheorem{corollary}[theorem]{Corollary} 
	\newtheorem*{corollary*}{Corollary} 
	\theoremstyle{definition}				
	\newtheorem*{conjecture*}{Conjecture} 
	\newtheorem{proposition}[theorem]{Proposition}
	\newtheorem*{proposition*}{Proposition}
	\newtheorem*{problem*}{Problem}
	
	\newtheorem*{example*}{Example}
	\newtheorem{definition}[theorem]{Definition} 
	\newtheorem*{definition*}{Definition} 
	\newtheorem*{notation*}{Notation} 
	\newtheorem*{remark*}{Remark} 
	\newtheorem*{exercise*}{Exercise}

\title{A geometric approach to counting norms in cyclic extensions of function fields} 
\date{}
\author{
Vlad Matei
}
\address[Vlad Matei]{
\begin{itemize}
\item[-]
Department of Mathematics University of California Irvine, 340 Rowland Hall, Irvine, CA, 92697
\end{itemize}
}
 \email[Vlad Matei]{vmatei@math.uci.edu}

\begin{document}

\begin{abstract} In this paper we prove an explicit version of a function field analogue of a classical result of Odoni about norms in number fields \cite{odoni}, in the case of a cyclic Galois extensions. In the particular case of a quadratic extension, we recover the result Lior Bary-Soroker, Yotam Smilanski, and Adva Wolf in \cite{lior} which deals with finding asymptotics for a function field version on sums of two squares, improved upon  by Ofir Gorodetsky in \cite{ofir}, and reproved by the author in his Ph.D thesis using the method of this paper. The main tool is a twisted Grothendieck Lefschetz trace formula, inspired by the paper \cite{jordan}. Using  a combinatorial description of the cohomology we obtain a precise quantitative result which works in the $q^n\rightarrow \infty$ regime, and a new type of homological stability phenomena, which arises from the computation of certain inner products of representations. 
		\end{abstract}
		
		\maketitle

\section{Introduction}

 Odoni obtained in \cite{odoni}  an asymptotic result for integers that can be expressed  as norms elements of a number field $K$. In \cite{lior2}, a function field analogue of this problem was proved, namely the authors obtain the main term for any Galois extension of function fields. The goal of this paper is reprove this result for norms in the case of  the cyclic extension $\mathbb{F}_q[\sqrt[d]{T}]/ \mathbb{F}_q[T]$  and obtain an explicit polynomial in $q$ that gives this count.
 
 Let $\mathbb{F}_q$ be the finite field with $q$ elements where $q$ is an odd integer and suppose that $\mathbb{F}_q$ contains all $d$-th roots of unity or equivalently $q\equiv 1 \hspace{1mm} (\text{mod} \hspace{1mm} d)$. Let $\varepsilon$ be a primitive rooth of $d$-th order. We denote with $\mathcal{M}_{n,q}$ the set of monic polynomials of degree $n$ in $\mathbb{F}_q[T]$. We can describe norms in the cyclic extension $\mathbb{F}_q[\sqrt[d]{T}]/ \mathbb{F}_q[T]$ as follows:

\begin{definition*} Let $q$ be an odd prime power. For a polynomials $f\in \mathcal{M}_{n,q}$  we define the characteristic function:

  $$ b_{q}(f)=  \left\{
	\begin{array}{lrc}
		1,  & \mbox{if } f(T^d)=cg(T)g(\varepsilon T)\ldots g(\varepsilon^{d-1}T)   &\mbox{for} \hspace{3mm} g\in\mathbb{F}_q[T] \\
		0, & \mbox{otherwise.} 
	\end{array}
  \right. $$

where $c=\varepsilon^{nd(d-1)/2}$.

\end{definition*}

Consider the counting function  $\displaystyle S_q(n)=\sum_{\substack{f\in \mathcal{M}_{n,q}\\ f\hspace{1mm} \text{squarefree}}} b_q(f)$. What we shall prove is an expansion on the first theorem above for squarefree polynomials , namely

\begin{theorem} For every $n\geq 2$ and $q\equiv 1 \hspace{1mm} (\text{mod} \hspace{1mm} d)$ we can write  $\displaystyle  S_q(n)=\sum_{k=0}^{n} b_{k,n} q^{n-k}$  such that

a) $b_{k,n}=\displaystyle \sum_{j=k}^{2k} \delta_{k,j,n} \dbinom{1/d+(n-j-k-1)}{n-j-k}$;

b) We have that  $\delta_{k,j,n}=\delta_{k,j,n+1}$  for $n\geq 2k$ and 

$$ |\delta_{k,j,n}| \leq  C \dbinom{k}{j-k} (1.1)^{k}$$

for some absolute constant $C$.

\end{theorem}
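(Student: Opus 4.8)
The strategy is to realize $B_q(n)$ as a point count on a variety (or stack) parametrizing pairs $(A,B)$ with $A^2+TB^2$ monic of degree $n$, modulo the sign ambiguity $(A,B)\mapsto(-A,-B)$, and to apply a Grothendieck–Lefschetz trace formula. The key geometric input, which the introduction advertises as a "twisted Grothendieck–Lefschetz trace formula" in the spirit of \cite{jordan}, should express $B_q(n)$ as an alternating sum $\sum_i (-1)^i \tr(\F_q \mid H^i)$ where the relevant cohomology groups carry a combinatorial description. Concretely I would expect that $f=A^2+TB^2$ has a factorization governed by which monic irreducibles $P$ split in $\FF_q[\sqrt{-T}]$ (equivalently $\left(\tfrac{P}{T}\right)=1$), so the parametrizing space fibers over a configuration-type space of effective divisors, and its cohomology is built out of the cohomology of symmetric powers / configuration spaces twisted by a local system recording the splitting condition. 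The Poincaré polynomial of such spaces is what produces the binomials $\binom{2m}{m}/4^m$: indeed $\sum_m \binom{2m}{m}4^{-m} t^m = (1-t)^{-1/2}$, which is exactly the shape of the Euler factor $K_q$, so the $q$-expansion of $B_q(n)$ should read off, coefficient by coefficient, from the graded pieces of this cohomology.

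First I would set up the precise correspondence: show $B_q(n) = \frac{1}{?}\#\{\text{$\FF_q$-points of }X_n\}$ for an explicit $X_n$, carefully handling the involution and the fact that $A=0$ forces $T\mid f$. Second, I would invoke the twisted trace formula proved earlier in the paper to write $B_q(n)=\sum_{i\ge 0}(-1)^i \tr(\F_q\mid H^i_c(X_{n,\bar\FF_q},\mathcal{L}))$ for the appropriate sheaf $\mathcal L$. Third — and this is the combinatorial heart — I would use the paper's combinatorial model of $H^\bullet$ to diagonalize the Frobenius action: the eigenvalues should all be of the form (power of $q$) times roots of unity coming from the quadratic character, so that grouping by the power $q^{n-k}$ gives part (a), with $\delta_{k,j,n}$ equal to a signed count of certain combinatorial objects (e.g. labelled/decorated partitions or lattice paths of "weight" recorded by $j$) contributing to the degree that produces $\binom{2(n-j)}{n-j}4^{-(n-j)}$. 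The identity $\delta_{k,j,n}=\delta_{k,j,n+1}$ for $n\ge 2k$ is then a homological stability statement: for $n$ large relative to $k$, the low-"codimension" part of the cohomology stabilizes, and the stable value is the $\delta_{k,j,\infty}$, so this step reduces to showing the relevant stabilization maps are isomorphisms in the range $n\ge 2k$ — typically via an explicit presentation of the cohomology as a module over a polynomial ring (representation/FI-type stability) or by a direct dimension count in the combinatorial model.

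The bound $|\delta_{k,j,n}|\le C(1.1)^k$ I would obtain from the combinatorial description together with the triangle inequality: $\delta_{k,j,n}$ is a sum of at most (number of combinatorial objects of weight $k$) terms, each of absolute value $1$ (or bounded), and the count of such objects grows at most like $c^k$ for some explicit $c$; the point is that $c<1.1$, or more honestly that the generating function $\sum_k (\#\text{objects of weight }k)\,t^k$ has radius of convergence $>1/1.1$, which one reads off from the product formula for the cohomology (each "Euler factor" contributing a controlled local term). A cleaner route, if available, is to bound $\sum_{j} |\delta_{k,j,n}|$ by evaluating the relevant generating function at a real point slightly inside its radius of convergence and using positivity of coefficients.

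The main obstacle I anticipate is the third step: pinning down the exact combinatorial model for $H^\bullet$ and verifying that Frobenius acts on each graded piece by a scalar $q^{n-k}$ times a sign, with no "off-diagonal" contributions and no unexpected eigenvalues (e.g. from non-split cohomology of the configuration spaces). Getting the stabilization range exactly $n\ge 2k$ — rather than some weaker $n\ge Ck$ — will require the combinatorial model to be tight, and the constant $1.1$ in the bound is surely the output of a specific generating-function radius, so the estimate must be done with the honest model, not a crude over-count. Everything else (the trace formula, the handling of the involution, the translation of "stability of cohomology" into "stability of $q$-expansion coefficients") I expect to be comparatively routine given the machinery set up earlier in the paper.
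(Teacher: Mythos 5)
You have the right high-level framework (twisted Grothendieck--Lefschetz plus a combinatorial description of cohomology plus a stabilization statement), but your first step, as written, computes the wrong statistic, and that is a genuine gap. $B_q(n)$ is a sum of indicator values $b_q(f)$, whereas the $\FF_q$-point count of a space parametrizing pairs $(A,B)$ with $A^2+TB^2$ monic of degree $n$ --- even after quotienting by $(A,B)\mapsto(-A,-B)$ --- computes $\sum_f r(f)$, where $r(f)$ is the number of representations of $f$; since $r(f)$ grows like $2^{\#\{\text{split prime factors}\}}$, no division by a fixed finite group turns this into $\sum_f b_q(f)$. This is exactly the classical distinction between counting lattice points in a disc ($\sim \pi x$) and Landau's count of integers that are sums of two squares ($\sim Kx/\sqrt{\log x}$): the two statistics have different main terms, so the discrepancy cannot be absorbed into the error. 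The paper never parametrizes representations. It first strips square factors ($f=U^2V$, and $f$ is a norm iff $V$ is), giving $B_q(n)=\sum_i q^i S_q(n-2i)$ and $S_q(n)=S^o_q(n)+S^o_q(n-1)$, and then encodes the indicator $s_q(f)$, for $f$ squarefree with $f(0)\neq 0$, as a class function $\chi_n(\sigma_f)$ of the signed permutation $\sigma_f$ that Frobenius induces on the square roots of the roots of $f$ ($s_q(f)=1$ iff no pair $\pm x_i$ is fused into one Frobenius orbit). The trace formula is then applied to the $G_n$-Galois cover $R_n\to \textbf{Conf}^0_n$, with $R_n$ the type B/C hyperplane complement, yielding $S^o_q(n)=\sum_i(-1)^i q^{n-i}\langle \chi_n, H^i_{\text{\'et}}(R_n;\QQ_l)\rangle_{G_n}$; by Kim--Lehrer purity the Frobenius eigenvalues are exactly powers of $q$, with no root-of-unity twists --- the quadratic character lives entirely in $\chi_n$, not in the eigenvalues as you suggest.

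The second gap is that the combinatorial heart is left as a hope. The paper gets the central binomials not from the heuristic $\sum_m \binom{2m}{m}4^{-m}t^m=(1-t)^{-1/2}$ but from Henderson's explicit decomposition of $H^p(R_n;\CC)$ as a sum of induced representations indexed by pairs of partitions $(\lambda^1,\lambda^2)$, followed by explicit inner-product computations showing that only pairs all of whose parts are powers of $2$ contribute, with the blocks of parts equal to $1$ contributing exactly $\binom{2m}{m}/4^m$. The stabilization $\delta_{k,j,n}=\delta_{k,j,n+1}$ for $n\geq 2k$ is then a finite combinatorial statement (the parts $>1$ of an acceptable pair sum to at most $2k$, so for $n\geq 2k$ the list of possibilities no longer depends on $n$), not an appeal to FI- or $FI_W$-type stability; your fallback of a direct count in the combinatorial model is the right one, but you would need the model first. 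Likewise the bound $|\delta_{k,j,n}|\le C(1.1)^k$ is not obtained from an exponential count with base below $1.1$: the dominant count is the number of partitions of $k$ into powers of $2$, which is subexponential ($\le Ae^{(\ln k)^2}$ by de Bruijn), and together with logarithmic factors from harmonic-type sums any base $>1$ suffices. Finally, you would still need the bookkeeping (the paper's last two propositions) that pushes the expansion of $S^o_q$ through $B_q(n)=\sum_i q^i S_q(n-2i)$ to produce part (a) with $j$ ranging over $[k,2k]$ and to transfer the bound from the $\Gamma$'s to the $\delta$'s.
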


Since there are exactly $q^{n-1}$ non-squarefree polynomials which are not norms we obtain the following:

\begin{corollary} We have that the number of polynomials that can be expressed as norms in the cyclic extension $\mathbb{F}_q[\sqrt[d]{T}]/ \mathbb{F}_q[T]$ is equal to $\dbinom{1/d+n-1}{n} q^n+O(q^{n-1})$.

\end{corollary}

If we remove the restriction on $f$ being squarefree, we can consider the following counting function $\displaystyle B_q(n)=\sum_{f\in \mathcal{M}_{n,q}} b_q(f)$. What we shall prove is the following, which is an explicit version of the result in \cite{lior2} for this particular Galois extension $\mathbb{F}_q[\sqrt[d]{T}]/ \mathbb{F}_q[T]$.

\begin{theorem}
For every $n\geq 2$ and $q\equiv 1 \hspace{1mm} (\text{mod} \hspace{1mm} d)$ we can write  $\displaystyle  B_q(n)=\sum_{k=0}^{n} c_{k,n} q^{n-k}$, where $c_{k,n}$ are independent of $q$, and they can be computed explicitly.

\end{theorem}

\begin{remark*} We will show this theorem in the last section, Section 7, and we will provide a recipe for computing each of the coefficients $c_{k,n}$.

\end{remark*}

The case $d=2$ is a function field analogue of a classical result of Landau. The problem over $\mathbb{Z}$ asks about the number of positive integers $\leq X$ that can be represented as $a^2+b^2$ for some nonnegative integers $a,b$. 
In \cite{lior1} the authors introduced the function field analogue of this problem, and obtain the first terms in this expansion, i.e the coefficients for $q^n$, $q^{n-1}$ and $q^{n-2}$. Ofir Gorodetski explored further this result \cite{ofir}, obtaining an asymptotic as $q^n\rightarrow \infty$. Moreover, using a generating  obtains a similar formula for coefficients of $q^j$ appearing above but using a generating function technique. 

The case $d=2$ was also the subject of the author's Phd thesis, and the methods used in proving Theorem 1 are the same. In the case $d=2$ we can use Theorem 1 to get a statement for all polynomials. Every polynomial can be factored uniquely as $f=A^2B$ and is a norm in the extension $\mathbb{F}_q[\sqrt{T}]/\mathbb{F}_q[T]$ if and  and only if $B$ is a norm; this fact was proven in \cite{lior}. Thus we can use this to  obtain a  recurrence relation in terms of squarefree polynomials, and combined  with Theorem 1 we obtain

\begin{corollary} For every $n\geq 2$ let $B_q(n)$ be the number of polynomials in $\mathcal{M}_{q,n}$ that can be written as $|U^2-TV^2|$ where $U,V\in\mathcal{M}_{q,n}$. 
Then  $\displaystyle  B_q(n)=\sum_{k=0}^{n} b_{k,n} q^{n-k}$  such that:

a) $b_{k,n}=\displaystyle \sum_{j=k}^{2k} \delta_{k,j,n} \frac{\dbinom{2(n-j)}{n-j}}{4^{n-j}}$;

b) We have that  $\delta_{k,j,n}=\delta_{k,j,n+1}$  for $n\geq 2k$ and 

$$ |\delta_{k,j,n}| \leq  C (1.1)^{k}$$

for some absolute constant $C$.

\end{corollary}

Finally we end some remarks regarding Theorem 1.

\begin{remark*}

$\bullet$ In the course of the proof of the theorem we shall give a geometric interpretation to the binomial coefficients appearing in the expansion, in terms of the Frobenius action on the roots of the polynomials arising in this statistic

$\bullet$ The stabilization of the coefficients  $\delta_{p,j,n}$ as $n$ gets large with respect to $p$ is explained by the stabilization of the multiplicity of a character paired against the cohomology of a certain space and thus can be  viewed thus a homological stabilization result. As remarked in \cite{kevin}, the class function which guides this statistic is not a character polynomial so it would be an interesting combinatorial problem to classify all types of possible stabilizations that could arise.

$\bullet$ In the case $d=2$ this statistic is the same as the number of polynomials that can be written as $|U^2-TV^2|$, but once $d\geq 3$ these statistics are not the same. A classical result of Erd\"{o}s and Mahler  states that the number of integers $\leq X$ that can be represented as $a^d+b^d$ for $d\geq 3$ is greater than $CX^{2/d}$. It would be interesting to prove an analog and even obtain a precise asymptotic over function fields.

\end{remark*}

Our strategy for the proof of Theorem 1 is a twisted Grothendieck Lefschetz formula inspired by work in  \cite{jordan}. This formula was independently in \cite{rowi} and generalized in \cite{kevin}. In a different direction a similar formula was proved for ramified coverings \cite{gadish}. 

\bigskip

\textbf{Acknowledgments} The author would like to thank  Jordan Ellenberg for suggesting this project, and helpful discussions on the subject matter of this paper.

\section{Preliminaries}

First we will restrict our attention to polynomials $f\in \mathcal{M}_{n,q}$ with $f(0)\neq 0$. This is due to nature of the representation theoretical result we shall use in our proof. So we if we denote with $\displaystyle S^{0}_q(n)=\sum_{\substack{f\in \mathcal{M}_{n,q}\\ f\hspace{1mm} \text{squarefree}, f(0)\neq 0}} b_q(f)$ it is easy to see that $S_q(n)=S^{0}_q(n)+S^{0}_q(n-1)$, since $T$ is a norm in the extension $\mathbb{F}_q[\sqrt[d]{T}]/ \mathbb{F}_q[T]$. 

For a squarefree polynomial $f\in\mathcal{M}_{n,q}$ with $f(0)\neq 0$ let the unordered $n$ tuple of  it's roots be $\{z_1,\ldots, z_n\}$ where $z_i\neq z_j$ and $z_i\neq 0$. Since $\F_q$ fixes the coefficients of the polynomial this induces a permutation on the roots of $f$. For each $z_i$ pick an $x_i\in\overline{\mathbb{F}}_q$ such that $x_i^d=z_i$.  It follows that  $\F_q$ induces a permutation on these representatives and moreover there is a signed component attached to each, corresponding  to a $d$-th root of unity. Call $R_{n,d}$ the space of all tuples $\{x_1,\ldots,x_n\}$  and  we shall also denote with $G_{n,d}=(\mathbb{Z}/d\mathbb{Z})^n\rtimes S_n$ the complex $d$-th roots of unity reflection group. 

 Now we can restate $b_q(f)=1$  if the $d$ roots of $x^d=z_i$ lie in different orbits of the $\F_q$ action on the space  $R_{n,d}$, thought  as the space of points on the tuples  $(x_i, \varepsilon x_i,\ldots \varepsilon^{d-1}x_i)$  where $\varepsilon$ is a primitive $d$ rooth of unity. These are the roots of $x^d=z_i$. More precisely  on each cycle of Frobenius, thought of as a permutation of the roots of $f$, we should have that the $d$-th roots of unity components corresponding to the $x_j$'s multiply to $1$.

\begin{definition*} Let $L_n$ be the subset of $G_{n,d}$, consisting of all $\mu_d$-signed permutation $\pi$ such that $x_i$ and $\varepsilon^{j}x_i$ lie in different orbits under $\pi$ for every $1\leq j\leq d$.

\end{definition*}

It is now time to relate the geometry of the space of roots and our counting problem. Note the fact  that $z_i\neq z_j$ imposes that $x_i\neq \varepsilon^{k} x_j$ for every $1\leq k\leq d$. 
It follows that we can identify $R_{n,d}$ as a hyperplane complement in affine $n$ space, namely $R_{n,d}=\{(\alpha_1,\ldots, \alpha_n)|\alpha_i\neq \varepsilon^{k}\alpha_j, \alpha_i\neq 0\}$.This is because for each $f\in \mathcal{M}_{n,q}$ considering the tuple $\{x_1,\ldots,x_n\}$ we can see this is a point of $R_{n,d}$ over $\overline{\mathbb{F}}_q$.  The representation theory and homological stability properties of this hyperplane arrangement are well understood; the interested reader can look at \cite{jen}.

\begin{definition} Let $\chi_n$ be the characteristic function  of $L_n$ as a subset of $G_{n,d}$.

\end{definition}
We shall prove a  theorem which relates the geometry of our space and the  counting problem, which is the same spirit as theorem 3.7 in \cite{jordan}. The main difference is to state and prove an analogous result for $G_{n,d}$  Galois covers instead of $S_n$ covers. 

To make it more explicit, if we consider a class function $\chi:G_{n,d}\rightarrow \mathbb{Q}$ then we can define its action on a squarefree polynomial $f$ in the following way: set $R(f)=\{z_1,z_2,\ldots,z_n\}$ to be sets of roots and we have an induced action of $\F_q$ on the set of $d$-th roots of these, namely $\{x_1,x_2,\ldots,x_n\}$ as above and this will give us a signed permutation $\sigma_f$.  We define $\chi(f)=\chi(\sigma_f)$ and we need to argue this is well defined. By forgetting the rooth of unity component on the  permutation, we recover the action of $\F_q$ on $R(f)$ and this has invariant cycle structure since cycles correspond to irreducible factors of $f$. Since conjugation preserves the cycle structure we are done.

\begin{theorem} Let $\displaystyle \mathcal{G}_q(n)=\sum_{f\in \mathcal{S}_{n,q},f(0)\neq 0} \chi(f)$. Then 

$$\mathcal{G}_q(n)=  \sum_{i} (-1)^i q^{n-i} \langle \chi, H^i_{\text{\'{e}t}}(R_{n,d};\mathbb{Q}_l)\rangle _{G_{n,d}}$$

\end{theorem}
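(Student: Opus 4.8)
The plan is to set up a $G_n$-equivariant version of the Grothendieck–Lefschetz trace formula for the hyperplane complement $R_n$, following the template of Theorem 3.7 in \cite{jordan} but replacing the symmetric group covers by the signed-permutation covers. First I would recall that a monic squarefree $f \in \mathcal{S}_{n,q}$ with $f(0)\neq 0$ together with a choice of square roots of its roots determines a point of $R_n$ over $\overline{\FF}_q$, and that the Frobenius $\F_q$ permutes the $2^n$ such lifts; the Galois group of the cover $\widetilde{R}_n \to \mathcal{C}_n$ (where $\mathcal{C}_n$ is the configuration space parametrizing the squarefree $f$'s, i.e. the quotient of $R_n$ by $G_n$) is exactly $G_n$. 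Concretely $R_n$ itself is the $G_n$-cover: the space of ordered tuples $(\alpha_1,\dots,\alpha_n)$ with $\alpha_i \neq \pm\alpha_j$, $\alpha_i\neq 0$, and $G_n = (\ZZ/2)^n\rtimes S_n$ acts freely on it with quotient $\mathcal{C}_n$. A point of $\mathcal{C}_n(\FF_q)$ — that is, an $\F_q$-orbit — corresponds to an $f$, and the conjugacy class of the "monodromy" $\sigma_f \in G_n$ attached to that point is well-defined by the argument already given in the text.

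The key identity to establish is that, for each $f$, the value $\chi(\sigma_f)$ equals the trace of $\F_q$ acting on the stalk at the $\FF_q$-point of $\mathcal{C}_n$ of the local system $V_\chi$ on $\mathcal{C}_n$ obtained by pushing forward $\mathbb{Q}_l$ along $R_n \to \mathcal{C}_n$ and projecting onto the $\chi$-isotypic piece — equivalently, $\chi(\sigma_f) = \sum_{x \in R_n, \ \pi(x) = f} \widetilde{\chi}(x)$ spread over the orbit, which matches the standard fact that the trace of Frobenius on $(\pi_*\mathbb{Q}_l)_f$ records the permutation of the fiber. Granting this, I would apply the Grothendieck–Lefschetz trace formula to $V_\chi$ on $\mathcal{C}_n$:
\[
\sum_{f \in \mathcal{C}_n(\FF_q)} \tr\big(\F_q \mid (V_\chi)_f\big) = \sum_i (-1)^i \tr\big(\F_q \mid H^i_{c,\text{\'et}}(\mathcal{C}_{n,\overline{\FF}_q}; V_\chi)\big).
\]
The left-hand side is $\mathcal{G}_q(n)$ by the previous paragraph. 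For the right-hand side I would use that $\mathcal{C}_n$ is a smooth affine variety of dimension $n$ (a hyperplane-complement quotient), so Poincaré duality converts compactly supported cohomology of $V_\chi$ into ordinary cohomology of the dual local system with a Tate twist, and $\langle \chi, H^i_{\text{\'et}}(R_n; \mathbb{Q}_l)\rangle_{G_n}$ is precisely the dimension of the $\chi$-part of $H^i(R_n)$, which is $H^i$ of $\mathcal{C}_n$ with coefficients in $V_\chi$ by the Hochschild–Serre / transfer argument for the finite free quotient. The crucial input that removes the individual Frobenius eigenvalue contributions is that $R_n$, being a complement of hyperplanes defined over $\FF_q$ (actually over $\QQ$), has cohomology which is \emph{Tate type} and pure: $H^i_{\text{\'et}}(R_n;\mathbb{Q}_l)$ is a sum of copies of $\mathbb{Q}_l(-i)$, on which $\F_q$ acts by the scalar $q^i$, and this is $G_n$-equivariantly true. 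Combining purity, Poincaré duality on the $n$-dimensional $\mathcal{C}_n$, and the dictionary between $V_\chi$-cohomology and the $\chi$-isotypic cohomology of $R_n$ yields exactly $\mathcal{G}_q(n) = \sum_i (-1)^i q^{n-i} \langle \chi, H^i_{\text{\'et}}(R_n;\mathbb{Q}_l)\rangle_{G_n}$.

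I expect the main obstacle to be the careful bookkeeping of the two places where $G_n$ differs from $S_n$: first, verifying that $\sigma_f$ really lands in $G_n$ well-defined up to conjugacy (the sign data is genuine monodromy of the square-root cover, not an artifact), which the excerpt sketches but which must be made precise in terms of how $\F_q$ acts on a chosen system of square roots $x_i^2 = z_i$; and second, matching the degree shift — one must check that the Tate twist coming out of Poincaré duality on $\mathcal{C}_n$ combines with the purity weight $i$ on $H^i(R_n)$ to produce the clean exponent $q^{n-i}$ rather than something like $q^{n-i}$ off by the dimension count, i.e. that $H^i_c(\mathcal{C}_n; V_\chi)$ contributes Frobenius eigenvalue $q^{n-i}$ with multiplicity $\langle \chi, H^{2n-i}(R_n)\rangle$... and then re-indexing via Poincaré duality $\langle \chi, H^{2n-i}(R_n)(n)\rangle = \langle \chi, H^{i}(R_n)\rangle$ for the relevant self-dual-up-to-twist structure. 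A secondary subtlety is that $\chi$ is only a $\QQ$-valued class function, not necessarily a virtual character with nonnegative pairings, but since the pairing $\langle \chi, - \rangle_{G_n}$ is $\QQ$-bilinear and $V_\chi$ can be taken as the corresponding $\QQ_l[G_n]$-module summand (or handled by linearity over the irreducible characters), this causes no real difficulty. Everything else — the trace formula itself, smoothness of the hyperplane complement, and the Hochschild–Serre identification — is standard and can be cited.
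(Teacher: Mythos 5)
Your proposal follows essentially the same route as the paper: realize $R_n\rightarrow \textbf{Conf}^{0}_n$ as an étale $G_n$-Galois cover, identify $\chi(\sigma_f)$ with the trace of Frobenius on the stalk of the associated local system, apply the twisted Grothendieck--Lefschetz formula with compactly supported cohomology, pass to the $\chi$-isotypic part of $H^i(R_n)$ via transfer, and finish with Poincaré duality plus the Tate-type/purity statement for hyperplane complements (which the paper sources to Kim and Lehrer) to get the eigenvalue $q^{n-i}$. The only cosmetic difference is that you apply Poincaré duality on the quotient $\textbf{Conf}^{0}_n$ while the paper applies it on $R_n$ before taking $G_n$-invariants, which amounts to the same computation.
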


 Here $\langle  \cdot , \cdot  \rangle $ denotes the standard product of class functions and the subscript $G_{n,d}$ denote the groups where the respective class functions live. 

As a corollary for our problem

\bigskip

\textbf{Corollary} We have that $$S^{0}_q(n)=  \sum_{i} (-1)^i q^{n-i} \langle \chi_n, H^i_{\text{\'{e}t}}(R_{n,d};\mathbb{Q}_l)\rangle _{G_{n,d}}.$$

\section{Proof of Theorem 4}

This is the main technical machinery to setup and prove for our problem. Let $\textbf{Conf}^{0}_{n}$ be the  affine complement $U_n=\{(z_1,z_2,\ldots, z_n)|z_i\neq z_j, z_i\neq 0\}$ modulo $S_n$. First let us argue that $R_{n,d}$ is a \'{e}tale Galois  cover of $\textbf{Conf}^{0}_n$ with Galois group $G_{n,d}$. 

Let $P_{n,d}$ be the set of monic degree $n$ polynomials which are norms in the extension $\mathbb{F}_q[\sqrt[d]{T}]/\mathbb{F}_q[T]$ . Consider the map 

$$\pi :\mathbb{A}^n\rightarrow P_{n,d}$$ defined by

$$\pi:(x_1,x_2,\ldots, x_n) \rightarrow f(T)= (T-x_1^d)(T-x_2^d)\ldots (T-x_n^d)$$ 
 
The map is well defined using proposition $22$. Note that the map is invariant under the $G_{n,d}$ action on the points $(x_1,x_2,\ldots, x_n)$ thus it factors through the scheme theoretic quotient $\mathbb{A}^n/G_{n,d}$ . Let us prove moreover that  the map
$\pi: \mathbb{A}^n/G_n\rightarrow P_n$ is an isomorphism. The $G_{n,d}$ invariant polynomials on $\mathbb{A}^n$ form a ring, namely $\mathbb{Z}[x_1,x_2,\ldots, x_n]^{G_{n,d}}$. First note that if such a polynomial is invariant under  the replacements $x_i\rightarrow \varepsilon^k x_i$'s then it has to be a polynomial in $x_i^d$'s. Thus $\mathbb{Z}[x_1,x_2,\ldots, x_n]^{G_{n,d}}=\mathbb{Z}[x_1^d,x_2^d,\ldots, x_d^2]^{S_n}$. As a function of $x_i$, the coefficient $a_i$ in $f$ is $\pm$ the $i$th symmetric polynomial $e_i(x_1^d,x_2^d,\ldots, x_n^d)$. The fundamental theorem of symmetric polynomials states

$$\mathbb{Z}[x_1^d,x_2^d,\ldots, x_n^d]^{S_n}=\mathbb{Z}[e_1,\ldots, e_n]=\mathbb{Z}[a_1,a_2,\ldots, a_n]$$,

thus giving the desired isomorphism. 

Under this map we can look thus at the preimage of $\textbf{Conf}^{0}_n$ and since this space can be identified with monic squarefree degree $n$ polynomials which do not vanish at zero, it can be easily seen that this preimage is  $R_{n,d}$. Since we can define $R_{n,d}$ in $\mathbb{A}^n$ as a hyperplane complement defined over $\mathbb{Z}[\mu_d]$, $R_{n,d}$ is a smooth $n$ dimensional scheme over $\mathbb{Z}[\mu_d]$. Since $\mu_d\subset \mathbb{F}_q$, the above scheme theoretic construction also works over our finite field. 

Since $G_{n,d}$ acts freely on $R_{n,d}$ by definition, restricting $\pi$ to a map $R_{n,d}\rightarrow \textbf{Conf}^{0}_n$ gives an \'{e}tale Galois cover with Galois group $G_{n,d}$. 

Now moving further note the fact that the Galois cover $R_n\rightarrow \textbf{Conf}^{0}_n$ gives a natural correspondence between finite-dimensional representations of $G_{n,d}$ and finite-dimensional local systems (locally constant sheaves) on $\textbf{Conf}^{0}_n$ that become trivial when restricted to $R_{n,d}$. Given $V$ a representation of $G_{n,d}$, let the $\chi_{V}$ be the associated character to it and let $\mathcal{V}$ be the corresponding local system on $\textbf{Conf}^{0}_n$. Initially this construction is done over $\mathbb{C}$ but since since every irreducible representation of $G_{n,d}$ can be defined over $\mathbb{Z}[\mu_d]$  (see \cite{geck},\cite{farb}), the local system $\mathcal{V}$ determines an $l$-adic sheaf and we shall not make a distinction between the two objects.

If $f(T) \in \textbf{Conf}^{0}_n$ and is a fixed point for the action of $\F_q$ on $\textbf{Conf}^{0}_n(\overline{\mathbb{F}}_q)$ then $\F_q$ acts on the stalk $\mathcal{V}_f$ over $f$. To give a concrete description, the roots of $f(T)$ are permuted by the action of Frobenius  on $\overline{\mathbb{F}}_q$, and moreover this induces a decorated permutation (each element comes with a choice of root of unity) on the $d$-th roots of the zeroes of the polynomial $f$, $\sigma_f$ which is defined up to conjugacy. The stalk $\mathcal{V}_f$ is isomorphic to $V$, and by choosing an appropriate basis the automorphism $\F_q$ acts according to $\sigma_f$. Thus we can conclude

$$ \text{tr}(\F_q:\mathcal{V}_f)=\chi_V(\sigma_f)  \hspace{4mm} (1)$$

The next ingredient we need is a version of the Grothendieck-Lefschetz trace formula with twisted coefficients. Namely for an appropiate system of coefficients $\mathcal{F}$, more precisely a $l$-adic sheaf, on a smooth projective variety $X$ defined  over $\mathbb{F}_q$ , we have :

$$\sum_{x\in X(\mathbb{F}_q)} \operatorname{tr}(\F_q|\mathcal{F}_x)=\sum_{i} (-1)^i \operatorname{tr}(\F_q:H^{i}_{\text{\'{e}t}}(X;\mathcal{F}))$$

This also holds for non-projective $X$, but we need to correct it by either using compactly supported cohomology or via Poincar\'{e} duality.

If we apply to our case using compactly supported cohomology we have that

$$ \sum_{f\in\textbf{Conf}^{0}_n(\mathbb{F}_q)} \text{tr}(\F_q|\mathcal{V}_f)= q^n\sum_{i} (-1)^i \text{tr}(\F_q:H^{i}_{c}(\textbf{Conf}^{0}_n;\mathcal{V}))  \hspace{4mm} (2)$$

Notice that the left hand side is exactly the statistical count on polynomials we need using $(1)$. The only thing left to unravel is the right hand side of the equality. 

First let us make some remarks about the setup. If $V$ is a $G_{n,d}$ representation, we denote by $\langle \chi, V\rangle$ the standard inner product of $\chi$ with the character of $V$; we can name this the multiplicity of $\chi$ in $V$, since this is true when $\chi$ is  irreducible, by Schur's lemma. Also note that for any class function on $G_{n,d}$ we can decompose it into a sum of irreducible characters and since both sides in  (2) are linear in $\chi$, it follows that we can reduce to the case of an irreducible character $\chi$ of $G_n$.

Let $\tilde{\mathcal{V}}$ denote the pullback of $\mathcal{V}$ to $R_{n,d}$.  Transfer gives us the isomorphism $H^{i}_{c}(\textbf{Conf}^{0}_n;\mathcal{V})\approx (H^i_{c}(R_{n,d};\tilde{\mathcal{V}}))^{G_{n,d}}$. Now we know that $\tilde{\mathcal{V}}$ is trivial on $R_{n,d}$, so we have 

$$  H^i_{c}(R_{n,d};\tilde{\mathcal{V}}) \approx H^i_{c}(R_{n,d};\mathbb{Q}_l) \otimes V $$ 

as $G_{n,d}$ representations. Putting it together 

$$H^{i}_{c}(\textbf{Conf}^{0}_n;\mathcal{V}) \approx (H^i_{c}(R_{n,d};\mathbb{Q}_l) \otimes V )^{G_n} \approx H^i_{c}(R_{n,d};\mathbb{Q}_l) \otimes_{\mathbb{Q}[G_{n,d}]} V$$

Now this gives the immediate consequence that $\text{dim}(H^{i}_{c}(\textbf{Conf}^{0}_n;\mathcal{V}))=\text{dim} ( H^i_{c}(R_{n,d};\mathbb{Q}_l) \otimes_{\mathbb{Q}[G_{n,d}]} V )$.

Since $V$ is self-dual as an $S_n$ representation,   $H^i_{c}(R_{n,d};\mathbb{Q}_l) \otimes_{\mathbb{Q}[G_{n,d}]} V$ is isomorphic to $\text{Hom}_{\mathbb{Q}[G_{n,d}]}(V; H^i_{c}(R_{n,d};\mathbb{Q}_l))$, whose dimension is computed using the inner product $\langle \chi, H^i_{c}(R_{n,d};\mathbb{Q}_l)\rangle$. 

Since $R_{n,d}$ is smooth of dimension $n$,  applying Poincar\'{e} duality gives 

$$ H^{2n-i}_{c}(R_{n,d};\mathbb{Q}_l)\approx \text{Hom} ( H^i_{\text{\'{e}t}}(R_{n,d};\mathbb{Q}_l); \mathbb{Q}_{l}(-n).$$

Since the action of $G_{n,d}$ on $\mathbb{Q}_{l}(-n)$ is trivial (this is the constant sheaf), we have that $\langle \chi, H^{2n-i}_{c}(R_{n,d};\mathbb{Q}_l)\rangle_{G_{n,d}}=\langle \chi, H^i_{\text{\'{e}t}}(R_{n,d};\mathbb{Q}_l)\rangle_{G_{n,d}}$. The last layer to uncover is the action of $\F_q$. 

We shall need the following theorem proved by Kim for a general field in \cite{kim} (see also Lehrer \cite{leh}).

\begin{theorem*}
Let $k$ be a field, and fix $l$ a prime different from the characteristic of $k$. Give a finite set of hyperplanes $H_1,\ldots, H_m$ in $\mathbb{A}^n$ defined over $k$, let $\mathcal{A}$ the complement: $\mathcal{A}=\mathbb{A}^n-\bigcup_{i=1}^{m} H_i$. Then:

\bigskip

(i) $H^{1}_{\text{\'{e}t}}(\mathcal{A};\mathbb{Q}_l)$  is spanned by the images of the $m$ maps: 

$$H^{1}_{\text{\'{e}t}}(\mathbb{A}^n-H_j;\mathbb{Q}_l) \rightarrow H^{1}_{\text{\'{e}t}}(\mathcal{A};\mathbb{Q}_l)$$ 

induced by the inclusion of $\mathcal{A}$ into $\mathbb{A}^n-H_j$ for $j=1,\ldots, m$.

\bigskip

(ii)  $H^{i}_{\text{\'{e}t}}(\mathcal{A};\mathbb{Q}_l)$ is generated by $H^{1}_{\text{\'{e}t}}(\mathcal{A};\mathbb{Q}_l)$ under cup product.

\end{theorem*}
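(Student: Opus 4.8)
The plan is to prove this as the étale analogue of the Brieskorn--Orlik--Solomon description of the cohomology of a hyperplane-arrangement complement, by a deletion--restriction induction driven by the étale Gysin (localization) sequence and purity. Since all maps and cup products in the statement are compatible with extension of the base field, I would first reduce to the case $k=\bar{k}$. I would record two standing facts. First, for a single hyperplane $H\subset\mathbb{A}^n$ one has $\mathbb{A}^n\setminus H\cong\mathbb{A}^{n-1}\times\mathbb{G}_m$, so $H^0_{\text{\'{e}t}}(\mathbb{A}^n\setminus H;\mathbb{Q}_l)=\mathbb{Q}_l$, the group $H^1_{\text{\'{e}t}}(\mathbb{A}^n\setminus H;\mathbb{Q}_l)\cong\mathbb{Q}_l(-1)$ is one-dimensional with a canonical generator $\eta_H$ (the class pulled back from $\mathbb{G}_m$, a ``$d\log\ell_H$'' class), and the higher cohomology vanishes. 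Second, for a smooth closed $Z$ of codimension one in a smooth $X$ with complement $U$, purity furnishes the long exact sequence
\[
\cdots \to H^{i-2}_{\text{\'{e}t}}(Z)(-1) \xrightarrow{g} H^i_{\text{\'{e}t}}(X) \xrightarrow{r} H^i_{\text{\'{e}t}}(U) \xrightarrow{\partial} H^{i-1}_{\text{\'{e}t}}(Z)(-1) \to \cdots,
\]
where $r$ is a ring homomorphism, $g$ is the Gysin pushforward and is $H^\ast_{\text{\'{e}t}}(X)$-linear by the projection formula, and the residue $\partial$ satisfies $\partial(r(b)\cup u)=\pm(b|_Z)\cup\partial(u)$. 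Throughout write $\mathcal{A}'=\mathbb{A}^n\setminus\bigcup_{j<m}H_j$, $Z=H_m\cap\mathcal{A}'$ (a complement of a hyperplane arrangement inside $H_m\cong\mathbb{A}^{n-1}$, hence geometrically irreducible), and $\mathcal{A}=\mathcal{A}'\setminus Z$; I will also write $\eta_{H_j}$ for the image of the hyperplane class in $H^1_{\text{\'{e}t}}(\mathcal{A}')$ or in $H^1_{\text{\'{e}t}}(\mathcal{A})$.

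For part (i) I would induct on $m$, the cases $m\le 1$ being the first standing fact. The localization sequence for $Z\subset\mathcal{A}'$ gives $0\to H^1_{\text{\'{e}t}}(\mathcal{A}')\xrightarrow{r}H^1_{\text{\'{e}t}}(\mathcal{A})\xrightarrow{\partial}H^0_{\text{\'{e}t}}(Z)(-1)=\mathbb{Q}_l(-1)$. Comparing with the localization sequence for $H_m\subset\mathbb{A}^n$ through the evident morphism of localization triangles, the generator $\eta_{H_m}$ of $H^1_{\text{\'{e}t}}(\mathbb{A}^n\setminus H_m)$ maps isomorphically onto $H^0_{\text{\'{e}t}}(H_m)(-1)$, and therefore restricts to a class on $\mathcal{A}$ whose residue generates $H^0_{\text{\'{e}t}}(Z)(-1)$. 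Hence $\partial$ is surjective and $H^1_{\text{\'{e}t}}(\mathcal{A})$ is spanned by $r(H^1_{\text{\'{e}t}}(\mathcal{A}'))$ together with the image of $\eta_{H_m}$; by the inductive hypothesis $r(H^1_{\text{\'{e}t}}(\mathcal{A}'))$ is the span of the images of $H^1_{\text{\'{e}t}}(\mathbb{A}^n\setminus H_j)$ for $j<m$, giving (i).

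For part (ii) the first step is to show every Gysin map $g$ above vanishes, so that the localization sequence splits into short exact sequences $0\to H^i_{\text{\'{e}t}}(\mathcal{A}')\xrightarrow{r}H^i_{\text{\'{e}t}}(\mathcal{A})\xrightarrow{\partial}H^{i-1}_{\text{\'{e}t}}(Z)(-1)\to 0$. Indeed $g(1)$ is the cycle class in $H^2_{\text{\'{e}t}}(\mathcal{A}')$ of the divisor $Z$; but on $\mathcal{A}'$ this divisor is cut out by the globally regular function $\ell_m$, hence is principal, so $\mathcal{O}_{\mathcal{A}'}(Z)\cong\mathcal{O}$ and $g(1)=c_1(\mathcal{O})=0$. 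By the projection formula $g$ then vanishes identically once restriction $H^\ast_{\text{\'{e}t}}(\mathcal{A}')\to H^\ast_{\text{\'{e}t}}(Z)$ is surjective; that holds in degree one by part (i) (each generating class of $H^1_{\text{\'{e}t}}(Z)$ is the restriction of some $\eta_{H_j}$, or vanishes when $H_j$ is parallel to $H_m$), hence in all degrees once (ii) is known for the lower-dimensional arrangement defining $Z$. Now I would induct on the pair $(n,m)$. Given $\alpha\in H^i_{\text{\'{e}t}}(\mathcal{A})$, write $\partial\alpha=\xi\cup\partial(\eta_{H_m})$ with $\xi\in H^{i-1}_{\text{\'{e}t}}(Z)$, using that $\partial(\eta_{H_m})$ generates $H^0_{\text{\'{e}t}}(Z)(-1)$; by the inductive hypothesis of (ii) for $Z$, $\xi$ is a polynomial $P$ in the degree-one classes $\eta_{H_j}|_Z$. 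Lifting, set $\tilde\gamma=r\big(P(\eta_{H_1},\dots,\eta_{H_{m-1}})\big)\in H^{i-1}_{\text{\'{e}t}}(\mathcal{A})$, a polynomial in degree-one classes of $\mathcal{A}$. The residue projection formula then gives $\partial(\tilde\gamma\cup\eta_{H_m})=\pm\big(P(\eta_{H_j})|_Z\big)\cup\partial(\eta_{H_m})=\pm\,\partial\alpha$, so after a sign adjustment $\alpha-\tilde\gamma\cup\eta_{H_m}$ lies in $\ker\partial=r\big(H^i_{\text{\'{e}t}}(\mathcal{A}')\big)$; the inductive hypothesis of (ii) for $\mathcal{A}'$ expresses it as a polynomial in degree-one classes, and hence so is $\alpha$.

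The combinatorial skeleton above — which localization sequences to use and in which order to run the induction — is routine; the real content, and where I expect the work to be, is the étale-cohomological input over an arbitrary field: purity (so the Gysin sequence with the correct Tate twists applies to the smooth but non-proper complements at hand), a clean construction of the hyperplane class $\eta_H$ together with its compatibility with restriction to sub-arrangements, and the two projection formulas (the $H^\ast_{\text{\'{e}t}}(\mathcal{A}')$-linearity of the Gysin pushforward and of the residue), which are exactly what make the lift-and-subtract step close the induction. In equal characteristic zero there is a shortcut — base change to $\mathbb{C}$ and combine Artin's comparison theorem with Brieskorn's theorem that the cohomology of a complex hyperplane complement is the degree-one-generated Orlik--Solomon algebra — and in characteristic $p$ one may instead lift the arrangement over a complete discrete valuation ring of mixed characteristic and apply smooth base change; but I would present the uniform localization-sequence argument above, since that is what is needed ``for a general field''.
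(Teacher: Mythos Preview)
The paper does not give its own proof of this theorem: it is quoted from Kim \cite{kim} (with a parallel reference to Lehrer \cite{leh}) and used as a black box to conclude that Frobenius acts on $H^i_{\text{\'{e}t}}(R_n;\mathbb{Q}_l)$ by the scalar $q^i$. So there is no ``paper's own proof'' to compare against.

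That said, your outline is the standard argument and is essentially what Kim and Lehrer do: a deletion--restriction induction on the arrangement, driven by the \'etale Gysin/localization sequence for the smooth divisor $Z=H_m\cap\mathcal{A}'$ inside $\mathcal{A}'$, with purity supplying the Tate twist. The two substantive points you isolate are exactly the right ones: (a) the Gysin pushforward $g$ vanishes because the divisor $Z$ is principal on the affine $\mathcal{A}'$ (so $g(1)=0$) and restriction $H^\ast_{\text{\'{e}t}}(\mathcal{A}')\to H^\ast_{\text{\'{e}t}}(Z)$ is surjective by the lower-dimensional inductive hypothesis together with part~(i); and (b) the residue $\partial$ satisfies the projection-type identity that lets you peel off a factor $\eta_{H_m}$ and reduce to $\mathcal{A}'$. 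The induction on $(n,m)$ closes cleanly. Your remark that in characteristic zero one can shortcut via Artin comparison and the classical Brieskorn/Orlik--Solomon theorem, and in positive characteristic via lifting and smooth base change, is also accurate; Kim's contribution is precisely to run the localization argument uniformly in \'etale cohomology.
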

 
As a consequence this theorem will give that the action of $\F_q$ on $H^i_{\text{\'{e}t}}(R_{n,d};\mathbb{Q}_l)$ is scalar multiplication by $q^i$. The action of $\F_q$ on $\mathbb{Q}_l(-n)$ is scalar multiplication by $q^n$ so the action of $\F_q$ on $H^{2n-i}_{c}(\textbf{Conf}^{0}_n;V))$ is scalar multiplication by $q^{n-i}$.

Putting it all together we obtain that

$$\text{tr}(\F_q: H^{2n-i}_{c}(\textbf{Conf}^{0}_n;\mathcal{V})))=q^{n-i} \langle \chi, H^i_{\text{\'{e}t}}(R_{n,d};\mathbb{Q}_l)\rangle_{G_{n,d}}$$

\section{Computation of the inner products}

To finish to proof of Theorem 2, note that according to Theorem 4 we just need to understand the inner product  $\langle \chi_n, H^i_{\text{\'{e}t}}(R_{n,d};\mathbb{Q}_l)\rangle _{G_{n,d}}$
 
Note that we can use instead  of the etale cohomology singular cohomology over $\mathbb{C}$, since for hyperplane arrangements, the cohomology depends only on the lattice of intersection of the hyperplane arrangement.

 To proceed to actual computations we shall need the following result in \cite{hend} which gives a  description of $H^i(R_n;\mathbb{C})$  as a $G_n$ representation. A similar result was obtained by \cite{doug}.

\begin{theorem*}[Henderson] As a representation of $G_n$, $ H^p(R_{n,d};\mathbb{C})$ is equal to $\displaystyle \bigoplus_{0\leq l \leq p} A^{l}(R_n)$ where $\displaystyle \varepsilon_n\otimes A^{l}(R_n)$ is isomorphic to the following direct sum:

$$\bigoplus_{\substack{\lambda^1,\lambda^2\\ |\lambda^1|+|\lambda^2|=n\\ l(\lambda^1)=n-p\\ l(\lambda^2)=l}} \operatorname{\huge{Ind}}^{G_n}_{\substack{(((\mu_d\times \mu_{\lambda^1_1})\times \ldots \times (\mu_d\times \mu_{\lambda^1_{n-p}})) \rtimes (S_{m_1(\lambda^1)}\times S_{m_2(\lambda^1)}\times \ldots)\\ \times((\mu_d\times \mu_{\lambda^2_1})\times \ldots \times (\mu_d\times \mu_{\lambda^2_{l}})) \rtimes (S_{m_1(\lambda^2)}\times S_{m_2(\lambda^2)}\times \ldots) ) }}  (\varepsilon \psi)$$

where, $\lambda^1=(\lambda^1_1,\ldots,\lambda^1_{n-p})$, $\lambda^2=(\lambda^2_1,\ldots,\lambda^2_l)$, $|\lambda^1|=\lambda^1_1+\ldots+\lambda^1_{n-p}$ and similarly for $\lambda^2$,  $\psi$ is the product of the standard inclusion characters $\mu_{\lambda_{a}^j}\hookrightarrow \mathbb{C}^{\times}$ and $\varepsilon$ is the product of the sign characters of the $S_{m_i(\lambda^1)}$ components.

\end{theorem*}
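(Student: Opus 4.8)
The plan is to forget the arithmetic entirely: the statement concerns only the cohomology of the type‑$B_n$ reflection arrangement as a module over its Weyl group, so I would argue by induction on $n$ via the Brieskorn decomposition. Recall from Section~2 that $R_n=\{(\alpha_1,\dots,\alpha_n):\alpha_i\neq\pm\alpha_j,\ \alpha_i\neq0\}$ is the complement in $\CC^n$ of the hyperplanes $\alpha_i=\alpha_j$, $\alpha_i=-\alpha_j$ and $\alpha_i=0$, i.e. the reflection arrangement of type $B_n$ (equivalently $C_n$), and $G_n=(\ZZ/2)^n\rtimes S_n=W(B_n)$ acts on it through its linear action on $\CC^n$. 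Since the cohomology ring of an arrangement complement, together with the action of arrangement automorphisms, depends only on the intersection lattice, I would work with that combinatorial model from the start, using singular cohomology over $\CC$ as the text allows.

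Brieskorn's theorem supplies a $G_n$-equivariant isomorphism $H^p(R_n;\CC)\cong\bigoplus_X H^p(\mathcal{M}(\mathcal{A}_X))$, the sum over codimension-$p$ flats $X$ of the $B_n$-lattice, where $\mathcal{A}_X$ is the subarrangement of hyperplanes through $X$ and $\mathcal{M}(\mathcal{A}_X)$ its complement, essential of rank $p$, so the term is its \emph{top} cohomology. Grouping flats into $G_n$-orbits turns this into $\bigoplus_{\mathrm{orbits}}\mathrm{Ind}_{\mathrm{Stab}_{G_n}(X)}^{G_n}H^p(\mathcal{M}(\mathcal{A}_X))$. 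A codimension-$p$ flat is a ``type-$B$ set partition'' of $\{1,\dots,n\}$: a distinguished, possibly empty, zero-block $Z$ on which all coordinates vanish, together with ordinary blocks, each carrying a sign vector defined up to a global flip; the codimension equals $|Z|+\sum(\text{block size}-1)$, which forces the number of ordinary blocks to be exactly $n-p$. Recording the ordinary-block sizes as a partition $\lambda^1$ (so $\ell(\lambda^1)=n-p$) and putting $z=|Z|=n-|\lambda^1|$, the localized arrangement splits as a product of braid (type-$A$) arrangements, one per ordinary block, times the full type-$B_z$ arrangement on $Z$, whence $\mathcal{M}(\mathcal{A}_X)\cong\prod_j\mathrm{Conf}_{|B_j|}(\CC)\times R_z$ and $H^p(\mathcal{M}(\mathcal{A}_X))$ is the exterior product of the top cohomologies of the factors; and $\mathrm{Stab}_{G_n}(X)$ is the product, over distinct ordinary-block sizes $s$, of wreath products $(\mu_2\times S_s)\wr S_{m_s(\lambda^1)}$ — the $\mu_2$ being the global sign-flip on a block — times $W(B_z)$ acting on $Z$. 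This is already the shape of the subgroups in the statement.

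It remains to feed in the local top cohomologies and assemble. For an ordinary block of size $s$, the top cohomology $H^{s-1}(\mathrm{Conf}_s(\CC))$ is, as an $S_s$-representation, $\mathrm{sgn}_s\otimes\mathrm{Ind}_{\mu_s}^{S_s}(\zeta)$ with $\zeta$ a primitive character of the cyclic group $\mu_s$ (the Lehrer--Solomon description of the top Orlik--Solomon component of the braid arrangement), and the block sign-flip acts on it trivially, being multiplication by $-1$ on $\CC^s$; this produces the factors $\mu_2\times\mu_{\lambda^1_a}$, the inclusion characters of $\psi$ on the $\mu_{\lambda^1_a}$, and the sign characters that end up inside $\varepsilon$ and the overall twist $\varepsilon_n$. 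For the zero-block I would invoke the inductive hypothesis in its top-degree case $p=z$, $\lambda^1=\varnothing$, namely $\varepsilon_z\otimes H^z(R_z)\cong\bigoplus_{\lambda^2\vdash z}\mathrm{Ind}_{((\mu_2\times\mu_{\lambda^2_1})\times\cdots)\rtimes(\prod_i S_{m_i(\lambda^2)})}^{W(B_z)}(\psi)$; the $\lambda^2$'s occurring here are exactly the second family of blocks, with $l=\ell(\lambda^2)$ the secondary index, and summing the construction over $z=|\lambda^2|$ (equivalently over $l$) gives the splitting $H^p(R_n)=\bigoplus_{0\le l\le p}A^l(R_n)$. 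Tensoring the local pieces, inducing up from $\mathrm{Stab}_{G_n}(X)$ to $G_n$, and using transitivity of induction — to replace each $(\mu_2\times S_s)$-factor by $\mu_2\times\mu_s$ and $W(B_z)$ by the $\lambda^2$-subgroup — then reproduces verbatim the induced representations in the statement, and tensoring through once by $\varepsilon_n$ absorbs all residual sign characters. The one point the recursion does not reach is the very top degree $p=n$: there the only flat is the origin, $\mathcal{A}_{\{0\}}=\mathcal{A}_{B_n}$, and Brieskorn is vacuous, so I would compute $H^n(R_n)$ as a $W(B_n)$-module separately — either from the explicit Orlik--Solomon presentation (the $W(B_n)$-action on a no-broken-circuit basis of the top component) or from the supersolvable fibration $R_n\to R_{n-1}$ with fiber $\CC$ minus $2n-1$ points, whose Leray spectral sequence degenerates and computes $H^*(R_n)$, top included, from $H^*(R_{n-1})$ while one tracks the sign-flip of $\alpha_n$ and the transpositions involving $n$.

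The main obstacle is this bookkeeping rather than any single hard theorem: identifying the flat-stabilizers in $W(B_n)$ with the explicit iterated wreath subgroups written down, pinning the $p=n$ base case, and — above all — keeping straight the zoo of one-dimensional twists (the trivial action of the block sign-flips, the $\mathrm{sgn}\otimes\mathrm{Lie}$ twist in each type-$A$ factor, the sign characters $\varepsilon$ of the $S_{m_i(\lambda^1)}$-factors, the inclusion characters $\psi$, and the global $\varepsilon_n$), so that everything collapses to the single clean formula instead of an unrecognizable sign-corrupted cousin of it.
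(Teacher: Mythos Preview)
The paper does not prove this theorem at all: it is quoted verbatim from Henderson \cite{hend} (with a nod to Douglass \cite{doug}) and used as a black box to compute the inner products in Section~4. So there is no ``paper's own proof'' to compare your proposal against.

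That said, your plan is a sound route to the result and is in the spirit of how such formulas are typically established for reflection arrangements. The Brieskorn decomposition, the parametrization of type-$B$ flats by a zero-block together with signed ordinary blocks, the identification of the localized arrangement as a product of braid arrangements times a smaller $B_z$, and the Lehrer--Solomon/Klyachko description of the top braid cohomology are all correct and fit together the way you describe. Two small corrections: at $p=n$ Brieskorn is not vacuous but tautological (the only codimension-$n$ flat is the origin and the identity $H^n(R_n)\cong H^n(R_n)$ gives no new information), which is why you need a separate argument there; and the block sign-flip acts on $\CC^s$ by $-1$, hence on $H^{s-1}$ by $(-1)^{s-1}$, not always trivially, so be careful when you say ``the block sign-flip acts on it trivially'' --- this parity is exactly one of the contributions to the $\varepsilon_n$ twist you are trying to track. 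Your own closing paragraph correctly identifies the real work as the sign bookkeeping; nothing in the outline is wrong, but turning it into the stated formula with every $\varepsilon$, $\psi$, and $\varepsilon_n$ in the right place is the entire content.
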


To make it easier to visualize the computations, we will interpret everything in term of matrix representations. The space $G_{n,d}$ can be thought of generalized permutation matrices where in each entry we replace the usual $1$ with an element of $\mu_d$. Now let's see how can we realize 

\begin{center}

$ H_{\lambda^1,\lambda^2}= ((\mu_d\times \mu_{\lambda^1_1})\times \ldots \times (\mu_d\times \mu_{\lambda^1_{n-p}})) \rtimes (S_{m_1(\lambda^1)}\times S_{m_2(\lambda^1)}\times \ldots)$

$\times((\mu_d\times \mu_{\lambda^2_1})\times \ldots \times (\mu_d\times \mu_{\lambda^2_{l}})) \rtimes (S_{m_1(\lambda^2)}\times S_{m_2(\lambda^2)}\times \ldots)  $

\end{center}

as a subgroup of $G_{n,d}$.

\begin{definition} A \emph{cell} is a  factor of the type $\mu_d\times \mu_v$.

\end{definition}

For constructing a matrix representative of the group  $\mu_d\times \mu_v=C_v$ note that we can take as  generators the $v\times v$ matrix $$\mathfrak{g}_v=\left\{ \begin{array}{lcr} g_{i+1,i}=1 \hspace{1mm} , \text{for} \hspace{1mm} 1\leq i\leq v \hspace{1mm} \text{where index is taken modulo} \hspace{1mm} v \\ 0\hspace{1mm}, \text{otherwise} \end{array}\right. $$ and it companions $\varepsilon g_v$ where $\varepsilon \in \mu_d$.

\begin{definition} A block is  a factor of the type  $(\mu_d\times \mu_v)^{m_v} \rtimes S_{m_v}$.

\end{definition}

To construct $H_{\lambda^1,\lambda^2}$ we first write the two partitions in descending order. Thus consider $\lambda_1=(a_1^{1},\ldots, a_r^{1})$ with $a_1^{1}>a_2^{1}>\ldots>a_r^{1}$ and each appears with multiplicity $m_i^{1}$ for $1\leq i \leq r$, and $\lambda_2=(a_1^{2},\ldots, a_s^{1})$ $a_1^{2}>a_2^{2}>\ldots>a_s^{2}$  and each appears  with multiplicity $m_j^{2}$ for $1\leq j\leq r$. Now we can construct the blocks $\mathcal{B}_i^{1}$ for $1\leq i\leq r$ and $\mathcal{B}_j^{2}$ for $1\leq j\leq s$. Finally we arrange the blocks on the main diagonal of the $n\times n$ matrix in order for $\lambda^1$ and $\lambda^2$,i.e, the diagonal will be $\mathcal{B}_1^1,\ldots, \mathcal{B}_r^{1}, \mathcal{B}_1^{2},\ldots, \mathcal{B}_r^{2}$. Each of the block from the above definition will be constructed as generalized permutation group on its \emph{cells}.

Now we proceed to the actual computation of the inner products  $\langle \chi_n, H^p(X_n;\mathbb{C})\rangle _{G_{n,d}}$. By theorem $5$ and Frobenius reciprocity it is equivalent to computing

$$\langle  \mbox{Res}^{G_{n,d}}_{H_{\lambda^1,\lambda^2}} \chi_n , \mbox{Res}^{G_{n,d}}_{H_{\lambda^1,\lambda^2}} \varepsilon_n\otimes  \varepsilon \psi \rangle$$, for each $\lambda^1$, $\lambda^2$ subject to the constraints in theorem 5. 

First  let us look closely at   $\mbox{Res}^{G_{n,d}}_{H_{\lambda^1,\lambda^2}} \varepsilon_n$.  Suppose the  the block structure of $H_{\lambda^1,\lambda^2}$ is given by the blocks $\mathcal{B}_1$, $\mathcal{B}_2$, $\ldots$, $\mathcal{B}_j$; here we ignore whether the blocks come from $\lambda^1$ or $\lambda^2$. Then noting that there is natural identification of $\varepsilon_n$ with the determinant of the corresponding permutation matrix

\begin{proposition} We have that $\varepsilon_n=\det(\mathcal{B}_1)\otimes\det(\mathcal{B}_2)\otimes \ldots \otimes \det(\mathcal{B}_j)$,

where $\det(\mathcal{B})$ is defined as the determinant of the block matrix $\mathcal{B}$ by ignoring the root of unity component $\mu_d$ in the entries.

\end{proposition}

We can restate this proposition also for our inner product. Namely, 

\begin{proposition}  

$$\langle \mbox{Res}^{G_{n,d}}_{H_{\lambda^1,\lambda^2}} \chi_n , \mbox{Res}^{G_n}_{H_{\lambda^1,\lambda^2}} \varepsilon_n\otimes  \varepsilon \psi \rangle= \langle \chi_{\mathcal{B}_1}, \mbox{det}\otimes (\varepsilon)_{\mathcal{B}_1} (\psi)_{\mathcal{B}_1} \rangle _{\mathcal{B}_1} \ldots  \langle \chi_{\mathcal{B}_j}, \mbox{det}\otimes (\varepsilon)_{\mathcal{B}_j} (\psi)_{\mathcal{B}_j} \rangle_{\mathcal{B}_j}$$

, where by abuse of notation we denote with $\chi_{\mathcal{B}}$ denotes the set of allowable signed permutations induced by the action of $\F_q$, $\varepsilon_{\mathcal{B}}$ is the restriction of $\varepsilon$ to the block, and similarly $\psi_{\mathcal{B}}$ is the restriction of $\psi$ to the block.

\end{proposition}

Further let us say a given block $\mathcal{B}$ is given by the factor of the type $(\mu_d\times \mu_v)^{m_v}\rtimes S_{m_v}$. Let us denote with $\mathcal{C}_1$, $\mathcal{C}_2$, $\ldots$, $\mathcal{C}_{m_v}$ the  \emph{cells} composing this block. Also we ignore the $\mu_d$ component on each \emph{cell}. Then we have

\begin{proposition} $\det(\mathcal{B})= \det(\mathcal{C}_1)\otimes\det(\mathcal{C}_2)\otimes \ldots \otimes \det(\mathcal{C}_{m_v})\otimes (\varepsilon_{m_v})^v$.

\end{proposition}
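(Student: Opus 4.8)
The plan is to make the matrix realization of $\mathcal{B}$ from Definitions 7--8 completely explicit and then read the determinant off by elementary block-matrix algebra. Recall that $\mathcal{B}=(\mu_2\times\mu_v)^{m_v}\rtimes S_{m_v}$ sits inside $G_{m_vv}\subset\GL_{m_vv}$ as generalized permutation matrices; partition the $m_vv$ coordinates into $m_v$ consecutive blocks of size $v$, one per cell $\mathcal{C}_1,\dots,\mathcal{C}_{m_v}$. A general element of $\mathcal{B}$ is a pair $((c_1,\dots,c_{m_v});\sigma)$ with $c_i\in\mu_2\times\mu_v$ and $\sigma\in S_{m_v}$; as a matrix $M$ it carries, for each $i$, the $v\times v$ matrix $m(c_i)$ of $c_i$ into the $(\sigma(i),i)$ block slot (the precise convention $(\sigma(i),i)$ vs.\ $(i,\sigma(i))$ is immaterial for the determinant). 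The first step is the factorization $M=P_\sigma\cdot D$, where $P_\sigma$ is the block-permutation matrix moving the $i$-th block of coordinates rigidly (by $I_v$) to the $\sigma(i)$-th, and $D=\di(m(c_1),\dots,m(c_{m_v}))$ is block-diagonal, each $m(c_i)$ being a power of $\mathfrak{g}_v$ up to a global sign by Definition 7. Since $\varepsilon_n$ is by definition the determinant character of $G_n$ (restricted, here, to $\mathcal{B}$) and $\det$ is multiplicative, this gives $\det(\mathcal{B})=\det(P_\sigma)\cdot\det(D)$ as characters of $\mathcal{B}$.

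Next I would compute $\det(P_\sigma)$. The matrix $P_\sigma$ realizes the permutation of the $m_vv$ coordinates obtained by ``inflating'' $\sigma$ by the factor $v$: position $a$ of block $i$ goes to position $a$ of block $\sigma(i)$. Decomposing $\sigma$ into transpositions and observing that a transposition $(i\,j)$ inflates to a product of $v$ disjoint transpositions $(i,a)\leftrightarrow(j,a)$, $a=1,\dots,v$, one gets $\det(P_\sigma)=\operatorname{sgn}(\sigma)^v=(\varepsilon_{m_v})^v(\sigma)$. This is exactly the tensor factor $(\varepsilon_{m_v})^v$, and it is here that the clause ``we ignore the signs on each cell'' is used: this factor records only the underlying permutation of the cells regarded as unsigned super-coordinates, while the $\mu_2$- and $\mu_v$-data internal to each cell are carried separately.

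Then $\det(D)=\prod_{i=1}^{m_v}\det(m(c_i))=\prod_{i=1}^{m_v}\det(\mathcal{C}_i)$ by the block-diagonal determinant rule, $\det(\mathcal{C}_i)$ being the determinant character of the $i$-th cell (for $v$ odd one may note that $\mu_2\times\mu_v$ is cyclic on $-\mathfrak{g}_v$ and its determinant is the unique order-two character, and for $v$ even the $\mu_2$-part contributes trivially, but the uniform matrix computation renders these case splits unnecessary). Assembling the three pieces yields $\det(\mathcal{B})=\det(\mathcal{C}_1)\otimes\cdots\otimes\det(\mathcal{C}_{m_v})\otimes(\varepsilon_{m_v})^v$; since the right-hand side is a priori a character of $(\mu_2\times\mu_v)^{m_v}\times S_{m_v}$, one checks it descends to the semidirect product because $\prod_i\det(\mathcal{C}_i)$ is invariant under the $S_{m_v}$-action permuting the cells, so the displayed identity is an equality of genuine characters of $\mathcal{B}$.

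The argument is essentially bookkeeping; the one point where care is needed --- and what I would flag as the main obstacle --- is pinning down the sign of the block-permutation matrix, i.e.\ the identity $\det(P_\sigma)=(\operatorname{sgn}\sigma)^v$, together with its clean reconciliation with the stated convention that the cell signs are ``ignored'' when forming the $(\varepsilon_{m_v})^v$ factor. Multiplicativity of $\det$, block-diagonal determinants, and the descent to the semidirect product are all routine.
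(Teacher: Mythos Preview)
Your proof is correct and follows essentially the same approach as the paper: factor a general element of $\mathcal{B}$ into a block-permutation times a block-diagonal piece, then observe that inflating a transposition of cells by $v$ yields $v$ coordinate transpositions, giving $\det(P_\sigma)=(\operatorname{sgn}\sigma)^v$. The paper phrases this more informally as counting ``moves'' to bring cells to the diagonal, but the content is identical; your version is simply more explicit (the factorization $M=P_\sigma D$ and the descent check are spelled out rather than left implicit).
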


\begin{proof} We just have to note that to bring to diagonal form the block if we just think of \emph{cells} as a unit we would need an even or an odd number of moves to diagonalize according to the sign of $(\varepsilon_{m_v})$. Since \emph{cells} are $v\times v$ dimensional, to switch places of \emph{cells} requires $v$ moves.  Thus the total number of moves needed  to bring each \emph{cell} on the diagonal is multiplied by $v$ and thus it agrees with $(\varepsilon_{m_v})^v$. 
\end{proof}

By the above propositions we can work at the block level to compute the inner products.  Note that since blocks are arranged diagonally, for any matrix in $H_{\lambda^1,\lambda^2}$ the corresponding permutation on $n$ element has cycle structure determined by the cycle decomposition of each block; here we think of the block as generalized permutation but do not take into account the \emph{cell} structure it has. The cycles correspond to irreducible factors of the polynomial, since the permutation on $n$ elements encodes the $\F_q$ action on the roots. 
Since every element in the block, say it is $(\mu_d\times \mu_v)^{m_v}\rtimes S_{m_v}$, is a generalized permutation on its \emph{cells},  it's cycle decomposition, thought of a permutation in $S_{vm_v}$, is influenced by the $S_{m_v}$ component and the order of its \emph{cells} in $\mu_v$. We sum this up in the following proposition

\begin{proposition} Consider a block $(\mu_d\times \mu_v)^{m_v}\rtimes S_{m_v}$. Let $\sigma$ an element of $S_{m_v}$ and let $\mathcal{C}$ be an arbitrary cycle of $\sigma$. Ignoring the root of unity component $\mu_d$, let the order of the \emph{cells} on the cycle  be $a_{1},\ldots, a_{l(\mathcal{C})}$ modulo $v$. Then this arrangement will correspond to $\cfrac{v}{\gamma_v(a_1+...+a_{l(\mathcal{C})})}$ cycles of length $l(\mathcal{C})  \gamma_v(a_1+...+a_{l(\mathcal{C})})$ in the block structure, where we denote with $\gamma_v(x)$ denotes the order of the element in the additive group $\mathbb{Z}/v\mathbb{Z}$.
\end{proposition}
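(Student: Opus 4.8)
\emph{Plan of proof.} The plan is to suppress the $\mu_2$--components, as the statement allows, identify the underlying group of the block with the wreath product $\mu_v\wr S_{m_v}$ acting by generalized permutation matrices built from the cyclic shift $\mathfrak{g}_v$, and then read off the cycle type of a single element directly from this model. Concretely, I think of the $vm_v$ standard basis vectors of the block as pairs $(j,x)$, with $j$ the cell index and $x\in\ZZ/v\ZZ$ the coordinate inside that cell, so that a group element is a permutation $g$ of these pairs of the shape $(j,x)\mapsto(\sigma(j),\,x+b_j)$ with $b_j\in\ZZ/v\ZZ$ the $\mu_v$--component attached to the $j$-th cell. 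Restricting to the cells lying on the chosen cycle $\mathcal C=(c_1\,c_2\,\cdots\,c_{l})$ of $\sigma$, with $l=l(\mathcal C)$, and labelling them $1,\dots,l$ so that the rotation amounts met in cyclic order are $a_1,\dots,a_{l}$, the element $g$ acts on $\{1,\dots,l\}\times(\ZZ/v\ZZ)$ by $(j,x)\mapsto(j+1,\,x+a_j)$, indices of cells read modulo $l$.

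The heart of the argument is a direct iteration. Applying $g$ exactly $l$ times returns to the starting cell but shifts the inner coordinate by the total rotation $S:=a_1+a_2+\cdots+a_{l}$; this shift is independent of the base point $x$ and of which cell on the cycle one starts from, because $\ZZ/v\ZZ$ is abelian. Hence $g^{l}$ acts on each cell as translation by $S$, whose order in $\ZZ/v\ZZ$ is by definition $\gamma_v(S)$. Writing a general power as $k=ql+r$ with $0\le r<l$, the point $g^{k}(1,x)$ lies in the cell reached after $r$ further steps and has inner coordinate $x+qS+(a_1+\cdots+a_r)$; so, for each fixed $r$, as $q$ varies the second coordinate runs over a coset of $\langle S\rangle\subseteq\ZZ/v\ZZ$, of size $\gamma_v(S)$. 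Thus the $\langle g\rangle$--orbit of any $(1,x)$ meets each of the $l$ cells in exactly $\gamma_v(S)$ points, so every orbit of $g$ on this set has length exactly $l\,\gamma_v(S)$. Since the invariant set has $lv$ elements, the number of orbits is $lv/(l\,\gamma_v(S))=v/\gamma_v(S)$, which is precisely the asserted count of $\dfrac{v}{\gamma_v(a_1+\cdots+a_{l(\mathcal C)})}$ cycles of length $l(\mathcal C)\,\gamma_v(a_1+\cdots+a_{l(\mathcal C)})$; summing over the cycles of $\sigma$, which index disjoint $g$--invariant subsets, recovers the full cycle structure of the block.

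I do not expect a genuine obstacle: this is the standard description of conjugacy classes in wreath products, and the only points needing care are bookkeeping ones — fixing one consistent left/right convention for the action of $\mu_v\wr S_{m_v}$, verifying that the loop sum $S=a_1+\cdots+a_{l(\mathcal C)}$ is invariant under a change of base point and of starting cell on the cycle, and checking that all orbits inside a single cell--cycle share the common length $l\,\gamma_v(S)$ so that the division $lv/(l\gamma_v(S))$ is exact. I would also add one sentence noting that reinstating the suppressed $\mu_2$--factors only changes signs along these cycles, and is accounted for separately by the determinant identities of the preceding propositions, so it affects neither the cycle lengths nor the count obtained here.
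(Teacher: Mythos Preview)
Your argument is correct and is the standard wreath-product orbit computation: model the block (after suppressing $\mu_2$) as $\mu_v\wr S_{m_v}$ acting on $\{1,\dots,m_v\}\times(\ZZ/v\ZZ)$, restrict to a single $\sigma$-cycle of length $l$, iterate $l$ times to see a uniform translation by $S=a_1+\cdots+a_l$ on the inner coordinate, and conclude that every orbit has length $l\cdot\gamma_v(S)$, hence there are $v/\gamma_v(S)$ of them. The bookkeeping points you flag (well-definedness of $S$ modulo $v$ under change of base point, exactness of the division) are all genuine and you handle them.

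As for comparison: the paper states this proposition without proof, treating it as an elementary observation about how the cycle decomposition of a block element is governed by the outer permutation $\sigma$ together with the accumulated inner rotations along each $\sigma$-cycle. Your write-up supplies exactly the omitted details, and your closing remark about the $\mu_2$-components being handled separately via the determinant identities is consistent with how the paper proceeds in Propositions~11 and~12.
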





We can now proceed to the actual computations of the inner product. First we will consider

\begin{proposition} Suppose that $v\geq 2$. Consider the block $\mathcal{B}=(\mu_d\times \mu_v)\rtimes S_{m_v}$. We have that that the inner product  $\langle \chi_{\mathcal{B}}, \mbox{det}\otimes (\varepsilon)_{\mathcal{B}} (\psi)_{\mathcal{B}} \rangle _{\mathcal{B}}$ is equal to

$$ (-1)^{(v+i)m_v}\dbinom{(-1)^{v+i} h(v)+m_v-1}{m_v}$$

where $\mathcal{B}$ appears in $\lambda^i$, $\dbinom{\alpha}{n}=\cfrac{\alpha(\alpha-1)\ldots(\alpha-n+1)}{n!}$ is the generalized binomial function and $\displaystyle h(v)=\frac{1}{dv}\sum_{k=0}^{v-1}  \gcd(d,\gamma_v(k)) (-1)^{(v-1)k} \omega_v^{k}$, where $\omega_v$ is a primite root in $\mu_v$.

\end{proposition}

\begin{proof} We will denote with $\omega_r$ be a  primitive root of unity of order $r$ for any $r\in \mathbb{Z}$, $r$ positive. Note that  $\chi _B$ is multiplicative on the cyle decomposition of $\mathcal{B}$ as a permutation; here again we ignore all the $\mu_d$ components coming from the \emph{cells}. By the above this is influenced by the permutation arising from the $S_{m_v}$ component and the order of \emph{cells} as $\mu_v$ elements. Thus let $\sigma \in S_{m_v}$ and let $\mathcal{C}$ be a cycle in it's decomposition. 
We want on the big cycles of length $l(\mathcal{C})  \gamma_v(a_1+\ldots+a_{l(\mathcal{C})})$ for the $\mu_d$ component exponents to sum to a number divisible by $d$ for $\chi _B$ not to be automatically zero . Thus letting them be $x_i$, with $0\leq i\leq d-1$ what we need is that $d|\gamma_v(a_1+...+a_{l(\mathcal{C})})(x_1+\ldots+x_{l(\mathcal{C})})$. The number of such tuples can be easily computed; namely there are $d^{l(\mathcal{C})-1}\gcd(d, \gamma_v(a_1+...+a_{l(\mathcal{C})}))$. Next we see that the determinant of each \emph{cell} is equal to $(-1)^{(v-1)a_i}$. Using Proposition 11 and ignoring the $\varepsilon_{m_v}^v$ component- we will add it at the end of our computations, we conclude that the inner product on the cycle $\mathcal{C}$ is equal to

$$ d^{l(\mathcal{C})-1}\sum_{0\leq a_i\leq v-1} \gcd(d, \gamma_v(a_1+...+a_{l(\mathcal{C})})) (-1)^{(v-1)\sum a_j} \omega_{v}^{\sum a_j}.$$

For each $0\leq k\leq v-1$ the are $v^{l(\mathcal{C})-1}$ tuples $(a_1,a_2,\ldots,a_{l(\mathcal{C})})$ with sum equal to $k$ modulo $v$. Thus we can rewrite the above as

$$d^{l(\mathcal{C})-1}v^{l(\mathcal{C})-1} \sum_{k=0}^{v-1}  \gcd(d,\gamma_v(k)) (-1)^{(v-1)k} \omega_v^{k}.$$



We postpone simplifying $\displaystyle h(v)=\frac{1}{dv}\sum_{k=0}^{v-1}  \gcd(d,\gamma_v(k)) (-1)^{(v-1)k} \omega_v^{k}$ until the end of this section, since it will not affect our computations.
Remembering that for $\sigma$ we need to take the product over cycles of these inner products we computed for a specific cycle , we obtain that for a given permutation the total contribution from the inner product is $h(v)^{c(\sigma)}(dv)^{m_{v}}$, where $c(\sigma)$ is the number of cycles of the permutation $\sigma$. 

Further on we need to make a distinction between blocks appearing in $\lambda^1$ or $\lambda^2$; namely because the of $\varepsilon_{m_v}$ appearing only in $\lambda^1$. Note that for any permutation in $S_{m_v}$ we have  $\varepsilon(\sigma)=(-1)^{m_v-c(\sigma)}$. If our considered block appears in $\lambda^1$ we obtain the inner product sum is

$$\displaystyle (dv)^{m_v} \sum_{\sigma\in S_{m_v}}(-1)^{(v+1)(m_v-c(\sigma))} (h(v))^{c(\sigma)}$$

otherwise it appears in $\lambda^2$ so the inner product sum is 

$$\displaystyle (dv)^{m_v} \sum_{\sigma\in S_{m_v}}(-1)^{v(m_v-c(\sigma))} h(v)^{c(\sigma)}.$$

It is well known that $\displaystyle \sum_{\sigma \in S_n} X^{c(\sigma)}=X(X+1)...(X+n-1)$, see for example \cite{stanley}. Remembering that we need to average, thus divide by the size of the block as a group, we have that the inner products can be compactly written as:

$$ (-1)^{(v+i)m_v}\dbinom{(-1)^{v+i}h(v)+m_v-1}{m_v}$$

where $\mathcal{B}$ appears in $\lambda^i$ and $\dbinom{\alpha}{n}=\cfrac{\alpha(\alpha-1)\ldots(\alpha-n+1)}{n!}$ is the generalized binomial function.

\end{proof}

For $v=1$, looking at the block that contains $1$ we note that it is isomorphic to a generalized permutation group $G_{k,d}$. Thus the inner product at block level just simplifies to computing the proportion    $\cfrac{\# L_k}{\# G_{k,d}}$.

\begin{proposition} Suppose we have a block made of ones i.e $\mathcal{B}=(\mu_d)^{m}\rtimes S_m=G_m$. Then we have

$$\langle \chi_{\mathcal{B}},\mbox{det}\otimes (\varepsilon)_{\mathcal{B}} (\psi)_{\mathcal{B}} \rangle _{\mathcal{B}} =  \left\{ \begin{array}{lcr}

\dbinom{1/d+m-1}{m} & \mathcal{B} \in \lambda^1\\

(-1)^m\dbinom{-1/d+m-1}{m} & \mathcal{B} \in \lambda^2

	\end{array} \right.
$$
  
\end{proposition}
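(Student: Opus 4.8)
The plan is to push the twisted pairing all the way down to the block $\mathcal{B}=(\mu_2)^m\rtimes S_m=G_m$, make the relevant one-dimensional character completely explicit on $G_m$, and then reduce the computation to counting signed permutations in $L_m$, weighted in one of the two cases by the sign of the underlying $S_m$-part. Write a typical element of $G_m$ as a pair $(\varepsilon,\sigma)$ with $\sigma\in S_m$ and $\varepsilon=(\varepsilon_1,\dots,\varepsilon_m)\in(\ZZ/2\ZZ)^m$ recording the signs. Unwinding the definition of $L_n$ on this block (equivalently, Proposition 10 with $v=1$ tracking the sign component), a cycle $c=(i_1\cdots i_k)$ of $\sigma$ produces, in the induced action on the $2m$ square roots $\{\pm x_1,\dots,\pm x_m\}$, two disjoint cycles of length $k$ when $\sum_{i\in c}\varepsilon_i\equiv 0$, and a single cycle of length $2k$ otherwise (because going once around $c$ sends $x_{i_1}$ to $(-1)^{\sum_{i\in c}\varepsilon_i}x_{i_1}$). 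Hence $(\varepsilon,\sigma)\in L_m$ exactly when $\sum_{i\in c}\varepsilon_i\equiv 0$ for every cycle $c$ of $\sigma$. Two consequences I would record at once: for a fixed $\sigma$ with $c(\sigma)$ cycles there are precisely $2^{m-c(\sigma)}$ sign vectors $\varepsilon$ with $(\varepsilon,\sigma)\in L_m$; and for any element of $L_m$ one has $\sum_{i=1}^m\varepsilon_i\equiv 0$.

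Next I would identify the character paired against $\chi_{\mathcal{B}}=\mathbf 1_{L_m}$. By Proposition 9 with $v=1$, $\det(\mathcal{B})$ is just the determinant of the signed permutation matrix, namely $\mathrm{sgn}(\sigma)\prod_i(-1)^{\varepsilon_i}$; the inclusion characters $\psi$ are trivial since every part of the partition here equals $1$; and by Henderson's theorem the factor $(\varepsilon)_{\mathcal{B}}$ contributes $\mathrm{sgn}(\sigma)$ when $\mathcal{B}$ sits in $\lambda^1$ and is trivial when $\mathcal{B}$ sits in $\lambda^2$. So the character integrated against $\mathbf 1_{L_m}$ is $\prod_i(-1)^{\varepsilon_i}$ in the $\lambda^1$ case and $\mathrm{sgn}(\sigma)\prod_i(-1)^{\varepsilon_i}$ in the $\lambda^2$ case. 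By the second consequence above, $\prod_i(-1)^{\varepsilon_i}\equiv 1$ on $L_m$, so on $L_m$ these restrict to the trivial character and to $\mathrm{sgn}(\sigma)$ respectively; this is the simplification that makes both pairings elementary.

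It then remains to evaluate two sums. In the $\lambda^1$ case the pairing equals $\#L_m/\#G_m=\frac{1}{2^m m!}\sum_{\sigma\in S_m}2^{m-c(\sigma)}=\frac{1}{m!}\sum_{\sigma}(1/2)^{c(\sigma)}$, and the cycle-index identity $\sum_{\sigma\in S_m}X^{c(\sigma)}=X(X+1)\cdots(X+m-1)$ at $X=1/2$ gives $\frac{(2m-1)!!}{2^m m!}=\binom{2m}{m}/4^m$. In the $\lambda^2$ case the pairing equals $\frac{1}{2^m m!}\sum_{\sigma}\mathrm{sgn}(\sigma)2^{m-c(\sigma)}=\frac{1}{m!}\sum_{\sigma}\mathrm{sgn}(\sigma)(1/2)^{c(\sigma)}$; substituting $\mathrm{sgn}(\sigma)=(-1)^{m-c(\sigma)}$ turns this into $\frac{(-1)^m}{m!}\sum_{\sigma}(-1/2)^{c(\sigma)}=\frac{(-1)^m}{m!}\,(-\tfrac12)(\tfrac12)(\tfrac32)\cdots(m-\tfrac32)$, which after a short double-factorial rearrangement becomes $\frac{(-1)^{m+1}(2m-3)!!}{2^m m!}=\frac{(-1)^{m+1}}{2m}\cdot\frac{\binom{2m-2}{m-1}}{4^{m-1}}$. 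The only genuine subtlety is the bookkeeping of Henderson's twist on this block — pinning down precisely which of $\mathrm{sgn}(\sigma)$ and $\prod_i(-1)^{\varepsilon_i}$ survive in the $\lambda^1$ versus $\lambda^2$ cases, consistently with Propositions 7--9 and the matrix realization — together with the small-$m$ check (notably $m=1$, where $(2m-3)!!$ is the empty product) in the final rearrangement; everything else is the same cycle-indicator evaluation already used in the proof of Proposition 11.
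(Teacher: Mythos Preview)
Your argument is correct and matches the paper's proof essentially step for step: both identify the character on the block, observe that for $v=1$ the $\psi$ part is trivial and the two $\mathrm{sgn}(\sigma)$ factors cancel in the $\lambda^1$ case (leaving none in $\lambda^1$ and one in $\lambda^2$), count the sign vectors over each $\sigma$ as $2^{m-c(\sigma)}$, and finish with the cycle-index identity $\sum_{\sigma\in S_m}X^{c(\sigma)}=X(X+1)\cdots(X+m-1)$ at $X=\tfrac12$ and $X=-\tfrac12$. If anything, your write-up is slightly more careful than the paper's: where the paper simply asserts that ``the $\psi$ and $\det$ of the cells components is trivial,'' you correctly note that the cell-determinant factor $\prod_i(-1)^{\varepsilon_i}$ is not literally trivial but equals $1$ on $L_m$ because each cycle carries an even number of minus signs --- which is exactly why it drops out of the pairing.
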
 

\begin{proof}  We can repeat the same argument as in the previous proposition's proof, but it will be much simpler since our \emph{cells} have size $1$ so the $\psi$ and $\det$ of the cells components is trivial.

Thus for blocks of $1$ appearing in $\lambda^1$ since the $\varepsilon_m$ components cancel out and we just need to have the sum of orders of the $\mu_d$ components on each cycle be $0$ modulo $d$. Thus the inner product  is just

$$\frac{1}{d^m \cdot m!}\sum_{\sigma\in S_m} d^{m-c(\sigma)}=\dbinom{1/d+m-1}{m}$$

For the blocks of $1$ appearing in $\lambda^2$ we have

$$\frac{1}{d^m \cdot m!}\sum_{\sigma\in S_m} (-d)^m \cdot (-d)^{-c(\sigma)}=(-1)^m\dbinom{-1/d+m-1}{m}$$

\end{proof}

We now turn to simplifying our expression for the function $h(v)$ appearing in proposition $11$ and finding a precise criterion when it is nonvanishing. 

\begin{definition}

A positive integer $x$ is $d$-\emph{factorable} if all its primes factors are amongst the prime factors of $d$.

\end{definition}

\begin{proposition} Let $v\geq 2$ and factor  $v=2^{v_1}v_2$ with $v_2$ odd. Then if $v>2$ is $d$-\emph{factorable} we have

$$h(v) =  \left\{ \begin{array}{lcr}

\displaystyle \frac{1}{dv} \prod_{\substack{p|\gcd(d,v)\\ p\hspace{1mm} \text{prime}}} (1-p) &  v_1=0\\

\displaystyle \frac{2}{dv} \prod_{\substack{p|\gcd(d,v/2)\\ p\hspace{1mm} \text{prime}}} (1-p)-\frac{1}{dv}\prod_{\substack{p|\gcd(d,v)\\ p\hspace{1mm} \text{prime}}} (1-p)  &  v_1=1\\

	\displaystyle\frac{1}{dv} \prod_{\substack{p|\gcd(d,v/2)\\ p\hspace{1mm} \text{prime}}} (1-p) & v_1>1
	\end{array} \right.
$$

, $h(2)=\cfrac{1+\gcd(2,d)}{2d}$, and otherwise $h(v)=0$. We make the convention that the empty product is equal to zero. 

\end{proposition} 

\begin{proof}

We first condition on what possible values can $\gamma_v(k)$ take. Note that it is a divisor of $v$ and for each divisor $j$ of $v$ we have that each $k$ with $\gamma_v(k)=j$ can be written an $\cfrac{v}{j}$ times an interval element in $\mathbb{Z}/j\mathbb{Z}$. Moreover, it is well known that the sum of primitive roots of unity in $\mu_r$ is equal to $\mu(r)$ where, $\mu$ now is the Mobius function. 

If $v$ is odd then the we will simply have $\displaystyle \sum_{\gamma_v(k)=j} (-1)^{(v-1)k} \omega_v^{k}=\sum_{\gcd(s,j)=1} \omega_j^s=\mu(j)$. Thus we have that 

$$h(v)=\sum_{k=0}^{v-1}  \gcd(d,\gamma_v(k)) (-1)^{(v-1)k} \omega_v^{k}= \sum_{j|v} \gcd(d,j)\mu(j)$$

Otherwise $v$ is even, and we will need to split further according to $\cfrac{v}{j}$ even or odd. If $\cfrac{v}{j}$ is even then then like before $\displaystyle \sum_{\gamma_v(k)=j} (-1)^{(v-1)k} \omega_v^{k}=\sum_{\gcd(s,j)=1} \omega_j^s=\mu(j)$. If $\cfrac{v}{j}$ is odd then $\displaystyle \sum_{\gamma_v(k)=j} (-1)^{(v-1)k} \omega_v^{k}=\sum_{\gcd(s,j)=1} (-\omega_j)^s$. Note that $j$ is even so every $s$ in this sum is going to be odd, thus $\displaystyle \sum_{\gcd(s,j)=1} (-\omega_j)^s=-\mu(j)$. If $4|v$ then $4|j$ for any such divisor $j$, so $\mu(j)=0$. Thus we have that 
if $4\nmid v$

$$h(v)= \sum_{j|\frac{v}{2}} \gcd(d,j)\mu(j)-\sum_{j|\frac{v}{2}} \gcd(d,2j)\mu(2j)$$ 

  and 

$$h(v)= \sum_{j|\frac{v}{2}} \gcd(d,j)\mu(j)$$

if $4|v$.

To finish the proof of the propositon we have to see when $\displaystyle \sum_{j|u} \gcd(d,j)\mu(j)$ is nonzero for $u>1$. We can obviously factor our integer $u$ into $u_1u_2$ where $u_1$ has only prime factors from $d$, and $u_2$ shares no prime factors with $d$. Then we can also factor every divisor of $u$ as $j_1j_2$ where $j_1|u_1$ and $j_2|u_2$.  We can thus rewrite  

$$ \sum_{j|u} \gcd(d,j)\mu(j)=\sum_{j_1|u_1} \gcd(d,u_1)\mu(j_1)\sum_{j_2|u_2}\mu(j_2)$$

It is well known that for any positive integer $t>1$ we have $\sum_{s|t} \mu(t)=0$. Thus for the above to be nonzero we must have $u_2=1$ and we can further simplify it to the following product

$$\prod_{\substack{p|\gcd(d,u)\\ p\hspace{1mm} \text{prime}}} (1-p)$$

The two cases $v$ odd, and $4|v$ are easy to see that follow directly from the above. For $v>2$ even and $4\nmid v$, note that $\displaystyle \sum_{j|\frac{v}{2}} \gcd(d,j)\mu(j)+\sum_{j|\frac{v}{2}} \gcd(d,2j)\mu(2j)=\sum_{z|v} \gcd(d,z)\mu(z)$ so we can rewrite $h(v)=\displaystyle 2\sum_{j|\frac{v}{2}} \gcd(d,j)\mu(j)- \sum_{j|v} \gcd(d,j)\mu(j)$. This finishes the proof of the proposition.

\end{proof}

\begin{proposition} Suppose $ \langle  \mbox{Res}^{G_{n,d}}_{H_{\lambda^1,\lambda^2}} \chi_n , \mbox{Res}^{G_{n,d}}_{H_{\lambda^1,\lambda^2}} \varepsilon_n\otimes  \varepsilon \psi \rangle \neq 0$. Then
the parts appearing in each of the partitions  $\lambda^1$ and $\lambda$ should only be $1$'s, $2$'s, and  $d$-\emph{factorable} integers.

\end{proposition}

\begin{definition} A pair of partitions $(\lambda^1,\lambda^2)$ will be called acceptable if it satisfies the conditions of proposition 14.

\end{definition}

\section{Some explicit computations}

Before we proceed to the proof of theorem 2, let us first show how our previous computations, help us write down the contributions coming from the $H^0$, $H^1$ and $H^2$ to the inner products and thus narrow down the highest $3$ terms in our $q$-expansion for number of such polynomials; i.e find the coefficient of each of $q^n$, $q^{n-1}$ and $q^{n-2}$.
\begin{enumerate}

\item $H^0$. For $H^0$ from our description we only have $A^0$ and this is just the partitions $\lambda^1_{1}+\ldots+\lambda^{1}_n=n$ so that means $\lambda^1_{1}=\ldots=\lambda^{1}_{n}=1$. Thus we get the inner product to be 

$$\dbinom{1/d+n-1}{n}$$

\item $H^1$. We only have $A^0$ and $A^1$.

\indent $\bullet$ For $A^0$ we have $\lambda^1_{1}+\ldots+\lambda^{1}_{n-1}=n$ thus the only partition that works is $(2,1,\ldots,1)$. We obtain that the inner product is 

$$\frac{1+\gcd(2,d)}{2d} \dbinom{1/d+n-3}{n-2}$$

\indent $\bullet$ For $A^1$ we have $\lambda^1_{1}+\ldots+\lambda^{1}_{n-1}+\lambda^{2}_{1}=n$ so the only solution is $\lambda^1=(1,...,1)$ and $\lambda^2_{1}=1$. Thus the inner product is

$$\frac{1}{d}\dbinom{1/d+n-2}{n-1}$$

\item $H^2$. We have three parts $A^0$, $A^1$ and $A^2$.

\indent $\bullet$ For $A^0$ we have partitions $\lambda^1_{1}+\ldots+\lambda^{1}_{n-2}=n$. We need to split off in two cases when $3\nmid d$ or $3|d$. 
If $3\nmid d$ the only acceptable partition is $(2,2,1,\ldots,1)$. Thus the inner product is 

$$\dbinom{-h(2)+1}{2}\dbinom{1/d+n-5}{n-4}=-\frac{(\gcd(2,d)+1)(2d-1-\gcd(2,d))}{8d^2}\dbinom{1/d+n-5}{n-4}$$

If $3|d$ then we have an extra acceptable partition namely $(3,1,\ldots, 1)$. Thus the inner product is 

$$  -\frac{2}{3d}\dbinom{1/d+n-4}{n-3}-\frac{(\gcd(2,d)+1)(2d-1-\gcd(2,d))}{8d^2}\dbinom{1/d+n-5}{n-4}$$

\indent $\bullet$ For $A^1$ we have partitions  $\lambda^1_{1}+\ldots+\lambda^{1}_{n-2}+\lambda^{2}_{1}=n$ and we either have $\lambda^1=(2,1,\ldots,1)$ and $\lambda^2_{1}=1$ or $\lambda^1=(1,\ldots,1)$ and $\lambda^2_{1}=2$. Thus the inner product is

$$\frac{1+\gcd(2,d)}{2d^2} \dbinom{1/d+n-4}{n-3}+ \frac{1+\gcd(2,d)}{2d}\dbinom{1/d+n-3}{n-2}$$

\indent $\bullet$ For $A^2$ we have the relation  $\lambda^1_{1}+\ldots+\lambda^{1}_{n-2}+\lambda^{2}_{1}+\lambda^2_{2}=n$ and again the only solution is $\lambda^1=(1,\ldots,1)$ and $\lambda^2=(1,1)$. Thus the inner product is

$$-\frac{(d-1)}{2d^2} \dbinom{1/d+n-3}{n-2}$$

\end{enumerate}

To finish note again that the above gives the first three terms in the count for the count $S^{0}_q(n)$ and we have $S_q(n)=S^{0}_q(n)+S^{0}_q(n-1)$ thus the we need to add the above counts for $n$ and $n-1$. We obtain the following

\begin{theorem} Define $h_{d,n}=\dbinom{1/d+n-1}{n}$. The number of squarefree polynomials which are norms in the extension $\mathbb{F}_q[\sqrt[d]{T}]/\mathbb{F}_q[T]$ is

   $$S_q(n)=h_{d,n} q^n+ B_1q^{n-1}+B_2q^{n-2}+O(q^{n-3})$$

where $B_1=A_1h_{d,n-1}+A_2h_{d,n-2}$ and $B_2=A_3h_{d,n-2}+A_4h_{d,n-3}+ A_5h_{d,n-4}$ the coefficients $A_i$ have the following values:

$A_1=1-\cfrac{1}{d}$, $A_2=-\cfrac{1+\gcd(2,d)}{2d}$, $A_3=\cfrac{1+d\gcd(2,d)}{2d^2}$, $A_5=-\cfrac{(\gcd(2,d)+1)(2d-1-\gcd(2,d))}{8d^2}$ and 

$\bullet$ If $3|d$ we have $A_4=-\cfrac{2}{3d}+ \cfrac{1+\gcd(2,d)}{2d}$

$\bullet$ Otherwise $A_4= \cfrac{1+\gcd(2,d)}{2d}$.

\end{theorem}

\section{Proof of Theorem 1}

We can now proceed to the proof of theorem $1$. We will obtain bounds, but these will be far from optimal. Also we will not write an explicit formula for the coefficient of each $h_i$ term appearing; the previous two sections provide the recipe for computing out this coefficient. The combinatorics involved in simplifying further the expressions seems hard. 

We start  looking at the equality $\lambda^1_{1}+...+\lambda^1_{n-p}+\lambda^2_{1}+\ldots+\lambda^2_{l}=n$. We will group terms by  looking at the multiplicity of $1$ in $\lambda^1$. We will consider a fixed $l$ and afterwards will sum over the $l$'s. Let us first  characterize the possible values of multiplicity of $1$  in $(\lambda^1,\lambda^2)$.

\begin{proposition} The multiplicity of $1$ in $\lambda^1$, call it $a$,  satisfies $n-2p+l\leq a\leq n-p$.

\end{proposition}

\begin{proof} Let $a$ the multiplicity of $1$ in $\lambda^1$ and $b$ the multiplicity of $1$ in $\lambda^2$ in an acceptable pair $(\lambda^1,\lambda^2)$  with $\lambda^1_{1}+...+\lambda^1_{n-p}+\lambda^2_{1}+\ldots+\lambda^2_{l}=n$. Then since the other numbers appearing are at least equal to $2$ we have $a+b+2((n-p+l)-a+l-b)\leq n $ so simplifying yields the  bound $ a+b \geq n-2p+2l$. Now since $b\leq l$ it follows that $a\geq n-2p+l$. We obviously have $a\leq n-p$ so the proof is finished.
\end{proof}

 \bigskip

\textbf{Proof Theorem 1}

\bigskip

 By the previous proposition let $n-s-p$ be multiplicity of $1$ in the partition $\lambda^1$, where $0\leq s\leq p$. Thus we are left to write $s+p=\lambda^{1}_1+...+\lambda^{1}_{s}+\lambda^2_{1}+\ldots+\lambda^2_l$. 

We need to know in how many ways we can write an integer as a sum of $d$-factorable integers. This can be thought of as a  restricted partition question, and similar questions have appeared in the literature \cite{bru}, \cite{krenn}. The work in \cite{krenn} is a variant of our partition problem, where the authors restrict the possible digits in the writing a number as a sum  of $d$-factorable integers. Nonetheless their method can show that we have the following estimate:

\begin{proposition} Let $b_d(y)$ the number of partitions of  a positive integer $y$ into $d$-\emph{factorable} integers. Then there is an absolute constant $A$ such that 

$$b_d(y)<Ae^{\ln(y)^{m+1}}$$

where $m$ is the number of prime of factors of $d$.

\end{proposition}

\begin{remark*} We will use the above in the weaker form that $b^{*}_d(y)<A(1.1)^y$, where in the partitions associated $b^{\ast}_d$ we also allow to have $1$'s, $2'$, besides $d$-\emph{factorable} integers.

\end{remark*} 
 
To finish the estimate we note that for every partition of $p$ into $1$'s, $2$'s and $d$-\emph{factorable} integers we just have to pick the $s$ integers that go into the partition $\lambda^1$. 
Every such partition has at most $p$ term. If  we assume the contrary, note that in $\lambda^1$ we have $s$ parts at least equal to $2$, since we took the $1$'s out of $\lambda^1$ and  in $\lambda^2$ we would have at least $p-s$ parts greater than or equal to $1$. The total sum thus would be greater than $2s+p-s=p+s$ and this is a contradiction. Since every inner product is at most $1$, putting everything together, we obtain the crude upper bound for $\delta_{p,s,n}$:
$$|\delta_{p,p+s,n}|\leq A\dbinom{p}{s}(1.1)^p$$

The stabilization of $\delta_{p,s+p,n}$ is purely a combinatorial statement; namely for $n\geq 2p$ we can consider all possible acceptable partitions. This ends the proof of the theorem.

\hfill $\qed$

\section{Proof of Theorem 3}

\begin{proposition}(a) Every irreducible monic polynomial $P\in\mathbb{F}_q[T]$ has the property that $P(T^d)$ is either irreducible or  $P(T^d)$ completely factors as $cg(T)g(T\varepsilon)\ldots g(T\varepsilon^{d-1})$, where $g$ is also monic irreducible.

(b) Every polynomial $f\in\mathbb{F}_q[T]$ can be uniquely written as $A^dB$ where the multiplicites of irreducible factors in $B$ are less than $d-1$. The polynomial $f$  is a norm in the extension $\mathbb{F}_q[\sqrt[d]{T}]/\mathbb{F}_q[T]$ if and only if $B$ is a norm.

\end{proposition}

\begin{proof} (a) The statement is obvious for $P=T$. Thus let us suppose $P(T^d)$, $P\neq T$ is not irreducible and let $g(T)$ be one of it's irreducible factors.  Let $r_1,\ldots,r_n$ be the roots of $g$. Then obviously  $f$ has roots $r_1^d,r_2^d,\ldots, r_n^d$. This implies that $g(\varepsilon^{i}T)|P(T^d)$. Next we have that every two polynomials $g(T\varepsilon^i)$ and $g(T\varepsilon^j)$, for $i\neq j$ are coprime. Assuming the contrary, they would share a common root; say $r_s\varepsilon^i=r_t\varepsilon^j$. If $s\neq t$ it follows that $r_s^d=r_t^d$ and thus $P$ has a repeated root, which is not allowed for an irreducible polynomial. If $s=t$ then we must have $r_s=0$ and thus $T|P$. Since $P$ is irreducible we must have $P=T$, a contradiction.
We can conclude thus that $g(T\varepsilon^i)$ and $g(T\varepsilon^j)$ are coprime thus $g(T)g(T\varepsilon)\ldots g(T\varepsilon^{d-1})|P(T^d)$. Observing that $cg(T)g(T\varepsilon)\ldots g(T\varepsilon^{d-1})$  is a monic polynomial that can be written as $Q(T^d)$ for some $Q\in\mathbb{F}_q[T]$ it follows that $Q(T)|P(T)$ and since $P$ is irreducible we get $P=Q$.

(b)  Every polynomial $f\in\mathbb{F}_q[T]$ can be uniquely factored as $P_1^{a_1}P_2^{a_2}\ldots P_r^{a_r}$. Considering the exponents modulo $d$ we obtain the required writing. The backward implication is trivial so we  will just prove the forward implication. Let us look at the individual prime factors in our polynomial. If $P_i$ does not split in the extension $\mathbb{F}_q[\sqrt[d]{T}]/\mathbb{F}_q[T]$, and $f$ is a norm, i.e $f(T)=ch(T)h(T\varepsilon)\ldots h(T\varepsilon^{d-1})$, we must have that $P_i(T^d)|h(T\varepsilon^{j})$ since it is irreducible. Let $m_i$ be its multiplicity. We see then that also $P_i^{m_i}(T^d)|h(T\varepsilon^{k})$ for all $0\leq k\leq d-1$, and thus its multiplicity in the factorization of $f$ is $dm_i$, i.e, a multiple of $d$. Thus only split primes can appear in $B$ and the proof is finished using part (a).

\end{proof}

This leads to the following definition and corollary.

\begin{definition*} Let $S_{d,q}(n)$ be the number of  polynomials $P\in \mathcal{M}_{n,q}$ which are norms in the extension $\mathbb{F}_q[\sqrt[d]{T}]/\mathbb{F}_q[T]$ and such that the multiplicities of it's irreducible factors are at most $d-1$.

\end{definition*}

\begin{corollary} We have that

$$B_q(n)=\sum_{1\leq r\leq n, r\equiv n\hspace{1pt} (\text{mod} \hspace{1pt} d)} S_{d,q}(r) q^{(n-r)/d}$$

\end{corollary}

To finish the proof we will look at the generating function for the sequence $S_{d,q}(n)$. We done with $\mathcal{I}$ the set of monic, irreducible polynomials in $\mathbb{F}_q[T]$ which are norms in the extension  $\mathbb{F}_q[\sqrt[d]{T}]/\mathbb{F}_q[T]$. We have that

$$1+\sum_{n\geq 1}S_{d,q}(n)X^n=\prod_{f\in\mathcal{I}} \left(X^{\deg(f)(d-1)}+X^{\deg(f)(d-2)}+\ldots+X^{\deg(f)}+1\right)$$

Let us note that we've computed in Theorem $1$ the following generating function:

$$S(X)=1+\sum_{n\geq 1}S_{q}(n)X^n=\prod_{f\in\mathcal{I}}\left(X^{\deg(f)}+1\right)$$

To connect this with the above generating function, we just note that we have the following factorization:

$$X^{d-1}+X^{d-2}+\ldots+X+1=\prod_{i=1}^{d-1}\left(X(-\varepsilon^i)+1\right)$$ 

where $\varepsilon$ is a primitive $d$-th root of unity. This implies that  

$$1+\sum_{n\geq 1}S_{d,q}(n)X^n=S(-\varepsilon X)\ldots S(-\varepsilon^{d-1} X).$$

Thus we can compute $S_{d,q}(n)$ in terms of the sequence $S_q(n)$. We do not pursue writing this explicitly down, and this finishes the proof of theorem $3$.

\end{document}